\documentclass[a4paper,12pt]{article}

\usepackage{cite} 
\usepackage{authblk}
\usepackage{parskip}
\usepackage{mathtools}
\usepackage{amssymb}
\usepackage{amsmath}
\usepackage{latexsym}
\usepackage{mathrsfs}  
\usepackage{bbm}       
\usepackage{amsthm}    
\usepackage{relsize}   
\usepackage{graphicx}
\usepackage{thmtools}  
\usepackage{mathrsfs}  
\usepackage{paralist}  
\usepackage{lipsum}    
\usepackage[all]{xy}   
\usepackage{hyperref}

\numberwithin{equation}{section}

\usepackage{titlesec}   

\titleformat{\section}[block]{\bfseries\filcenter}
{{\upshape\thesection\enspace}}{.5em}{}

\titleformat{\subsection}[block]{\filcenter}
{{\upshape\thesubsection\enspace}}{.5em}{} 

\titleformat{\subsubsection}[block]{\filcenter}
{{\upshape\thesubsubsection\enspace}}{.5em}{} 

\usepackage{enumitem}  
\setlist{nosep}  
\setitemize[0]{leftmargin=*}
\setenumerate[0]{leftmargin=*}
\setenumerate[1]{label={(\arabic*)}} 

\newcommand{\N}{\mathbb{N}}     
\newcommand{\Q}{\mathbb{Q}}     
\newcommand{\R}{\mathbb{R}}     
\newcommand{\Prob}{\mathbb{P}}  
\newcommand{\Exp}{\mathbb{E}}   
\newcommand{\goth}[1]{\mathfrak{#1}} 
\newcommand{\ind}[2]{\mathbbm{1}_{#1}\left( #2 \right)}          
\newcommand{\inner}[2]{\left\langle #1 \, , \, #2 \right\rangle} 
\newcommand{\norm}[1]{\left|\left|#1\right|\right|}              
\newcommand{\triplet}[3]{\left( #1, #2, #3 \right) }             
\newcommand{\ProbSpace}{\triplet{\Omega}{\mathscr{F}}{\Prob}}    
\newcommand{\quadraVari}[1]{\left\langle \left\langle  #1  \right\rangle \right\rangle } 
\newcommand{\abs}[1]{\left| #1 \right|}                          
\newcommand{\defeq}{\mathrel{\mathop:}=}                         
\newcommand\restr[2]{{
  \left.\kern-\nulldelimiterspace 
  #1 
  \vphantom{\big|} 
  \right|_{#2} 
  }}
  
\makeatletter
\newsavebox{\@brx}
\newcommand{\llangle}[1][]{\savebox{\@brx}{\(\m@th{#1\langle}\)}%
  \mathopen{\copy\@brx\kern-0.5\wd\@brx\usebox{\@brx}}}
\newcommand{\rrangle}[1][]{\savebox{\@brx}{\(\m@th{#1\rangle}\)}%
  \mathclose{\copy\@brx\kern-0.5\wd\@brx\usebox{\@brx}}}
\makeatother

\theoremstyle{plain} 
\newtheorem{theorem}{Theorem}[section]    
\newtheorem{proposition}[theorem]{Proposition} 

\newtheorem{lemma}[theorem]{Lemma}
\newtheorem{assumption}[theorem]{Assumption}

\theoremstyle{definition} 
\newtheorem{definition}[theorem]{Definition}
\newtheorem{example}[theorem]{Example}
\newtheorem{remark}[theorem]{Remark}

 \title{It\^{o}'s formula for L\'evy-It\^{o} processes taking values in the dual of nuclear space}
\author{C. A. Fonseca-Mora}
\affil{  Escuela de Matem\'{a}tica,  Universidad de Costa Rica,\\ San Jos\'{e}, 11501-2060, Costa Rica. \\

\noindent E-mail:  christianandres.fonseca@ucr.ac.cr }

\date{}

\begin{document}

 \maketitle

\abstract{ 
Using the theory of stochastic integration in duals of nuclear spaces with respect to cylindrical martingale-valued measures, a vector-valued It\^{o} formula is proved for generalized It\^{o} processes defined with respect to these integrals. The abstract result is then applied to prove an It\^{o} formula for L\'evy-It\^{o} processes defined with respect to L\'evy processes taking values in the dual of a reflexive nuclear space.  
}

\smallskip

\emph{2020 Mathematics Subject Classification:} 60H15, 60H05, 60G51, 60G20. %

\emph{Key words and phrases:} It\^{o} formula, cylindrical martingale-valued measures; dual of a nuclear space; L\'{e}vy processes.

\section{Introduction}

L\'evy processes are some of the most widely used models for the noise of a stochastic differential equation (SDE) or a stochastic partial differential equation (SPDE), providing a natural framework for modeling random phenomena that exhibits both continuous fluctuations and discontinuous jumps. While the theory employing L\'evy processes in finite-dimensional spaces is now well established, many applications arising in SDEs and SPDEs, mathematical physics, finance and dynamical systems require a corresponding theory in function spaces. In particular, some stochastic models naturally lead to random objects that cannot be described as ordinary functions, but must be interpreted as generalized functions or distributions. Classical examples include white noise models, random fields and solutions to SPDEs driven by singular forcing terms. In this framework, spaces of distributions, or more generally, the dual of a nuclear space,  provide a natural state space for stochastic dynamics (e.g. see \cite{BojdeckiGorostiza:1986, HitsudaMitoma:1986, KallianpurMitoma:1992, KallianpurXiong, KorezliogluMartias:1984}). 

The study of L\'evy processes in duals of nuclear spaces initiated with the work of \"Ust\"unel in \cite{Ustunel:1984}, proving for the first time a L\'evy-It\^{o} decomposition for additive processes taking values in a space of distributions. An extension to the work of \"Ust\"unel is carried out by the author in \cite{FonsecaMora:Levy}. There, the author initiated a systematic study of L\'evy processes in the strong dual $\Phi'$ of a general nuclear space $\Phi$. In particular, L\'evy measures where thoroughly studied and it is proved a L\'evy-It\^{o} decomposition with similar properties as in the finite dimensional setting. This allowed the author to develop a theory of stochastic integration with respect to square integrable L\'evy processes in \cite{FonsecaMora:SPDECMVM} and to prove existence and uniqueness of solutions to SPDEs with multiplicative noise. In \cite{FonsecaMora:SPDELevy}, the author carries out a further extension of the theory of stochastic integration and of (time-inhomogeneous) SPDEs to the case where the driving noise if a general L\'evy process (i.e. without finite moments assumptions).

A natural next step in the development of stochastic analysis with L\'evy noise in the dual of a nuclear space, is to establish an It\^{o} formula. 
In the literature, there are some works that prove generalizations of It\^{o} formula in a space of distributions for $\R^d$-valued semimartigales (see e.g.\cite{Bhar:2017, CatuognoOlivera:2014, Krylov:2011,  Ustunel:1982Ito}). However, to the extent of our knowledge, the only work that proves It\^{o}'s formula in a space of distributions with respect to  a distribution-valued (non-homogeneous) Wiener processes is the work of Jakubowski in \cite{Jakubowski:1995}. 

In this article we prove a vector-valued It\^{o} formula with respect to a L\'evy-It\^{o} process taking values in the dual of a reflexive nuclear space. To do this, we follow a similar program to that in \cite{FonsecaMora:SPDECMVM, FonsecaMora:SPDELevy}. First, we prove an It\^{o} formula for an It\^{o} process defined with respect to a cylindrical martingale-valued measure. Then, we apply our result to prove an It\^{o} formula for a L\'evy process via its L\'evy-It\^{o} decomposition obtained in \cite{FonsecaMora:Levy}. In the following paragraphs we give a description of our main results. 

Let $\Phi$ be a locally convex space with strong dual $\Phi'$. Let $M=(M(t,A): t \geq 0, A \in \mathcal{R})$ be a cylindrical martingale-valued measure on $\Phi'$, i.e. $(M(t,A): t \geq 0)$ is a cylindrical martingale in $\Phi'$ for each $A \in \mathcal{R}$ and $M(t,\cdot)$ is finitely additive on a $\mathcal{R}$ for each $t \geq 0$. Here $\mathcal{R} \subseteq \mathcal{B}(U)$ is a ring and $U$ a topological space. Let $\Psi$ and $\Gamma$ be two separable, complete, bornological, nuclear spaces (as for example are the classical spaces of distributions). A $\Psi'$-valued generalized It\^o process is of the form 
$$X_{t}= \xi+ \int_{0}^{t} B(r)dr+ \int_{0}^{t} \int_{U} R(r,u) M(dr,du), \quad t \geq 0, $$
where further details on the processes $B: \R_{+} \times \Omega \rightarrow \Psi'$ and $R: \R_{+} \times \Omega \times U \rightarrow \mathcal{L}(\Phi',\Psi')$ will be given later. For a $C^{1,2}$ class function $F:[0, \infty) \times \Psi' \rightarrow \Gamma'$, we will show that almost surely we have for every $t\geq 0$
$$
\begin{aligned}
& F(t, X_t) = F(0,\xi)+\int_{0}^{t} F_{t}(s, X_{s-}) d s + \int_{0}^{t}\!\!\int_U \, F_{x}(s, X_{s-})R(s, u) \, M(ds, du) \\
& + \int_{0}^{t} \, F_{x}(s, X_{s-}) B(s) \, ds  +\frac{1}{2} \int_{0}^{t} \!\!\int_U \textup{Tr}_{R(s, u)} \left( F_{x x}(s, X_{s-}) \right) \mu(du) dr \\
& +\sum_{0<s \leq t}\left[F\left(s,X_s\right)-F\left(s,X_{s-}\right)-F_x\left(s,X_{s-}\right)\left(\Delta X_{s} \right)\right],
\end{aligned}
$$
where $\mu$ is a $\sigma$-finite measure on $(U, \mathcal{B}(U))$ satisfying $\mu(A)< \infty$ $\forall \, A \in \mathcal{R}$, and related to the covariance structure of $M$ (see \eqref{eqDefiNuclearCMVM}). The above It\^{o} formula generalizes that in \cite{Jakubowski:1995} in several directions. First, the (non-homogeneous) Wiener processes are a particular example of a cylindrical martingale-valued measure (see Example 3.4 in \cite{FonsecaMora:SPDECMVM}). Second, each $M(t,A)$ is a cylindrical martingale with discontinuous paths, hence $M$ is not necessarily a $\Phi'$-valued process and $\Phi$ is only assumed to be a locally convex space. Third, the formula in 
\cite{Jakubowski:1995} is formulated for functions $F:[0, \infty) \times \Psi' \rightarrow H$, where $H$ is a separable Hilbert space. This formulation is natural if the driven noise is Wiener. However, for a more general cylindrical noise this is an unnecessary restriction. For our result on It\^o's formula we found more convenient to formulate it assuming the image of $F$ is the dual $\Psi'$ of the complete, bornological, nuclear space $\Psi$ (in particular, $\Psi'$ can be any of the classical spaces of distributions). 

For our proof of  It\^{o}'s formula with respect to $M$, we have employed a generalization of the arguments used in \cite{Jakubowski:1995} to allows jumps. The main idea is to use an appropriate localizing sequence to settle the proof in the context of a separable Hilbert space embedded in $\Psi'$, and then to employ  It\^{o}'s formula for cylindrical martingale-valued measures in Hilbert spaces recently proved in \cite{CambroneroEtAl:ItoFormula}. 

Now we describe our main result on It\^{o} formula for L\'evy-It\^o processes. 
Assume now that $\Phi$ is a reflexive nuclear space and that $(L_{t}:t \geq 0)$ is a $\Phi'$-valued L\'evy process with L\'evy -It\^o decomposition 
$$ L_{t}=t\goth{m}+W_{t}+\int_{B_{\rho'}(1)} f \widetilde{N} (t,df)+\int_{B_{\rho'}(1)^{c}} f N (t,df), $$
with Poisson random measure $N$ and corresponding L\'evy measure $\nu$ (see Section \ref{sectItoFormLevyProcess} for further details on the components of the above decomposition). A L\'{e}vy-It\^{o} process defined by $L$ is a $\Psi'$-valued process of the form
\begin{eqnarray*}
X_t 
& = &\xi + \int_0^t B(s) \, ds +\int_0^t R(s,0)\goth{m} \, ds + \int_{0}^{t}  R(r,0) dW_{r} \\ 
& + & \int_{0}^{t} \int_{B_{\rho'}(1)}  R(r,f) \widetilde{N}(dr,df) + \int_0^t\!\!\int_{B_{\rho'}(1)^c} R(s,u) u \, N(ds, du),     
\end{eqnarray*}
where $R: [0, \infty) \times \Omega \times \Phi' \rightarrow \mathcal{L}(\Phi',\Psi')$. The first integral defined via radonification, the second and third using a cylindrical martingale-valued measure, and the fourth is a finite random sum. Using our It\^{o} formula for cylindrical martingale-valued measures, and an interlacing procedure, we prove the following version of It\^{o} formula for the L\'{e}vy-It\^{o} process $X_{t}$, 
\begin{flalign*}
& F(t, X_t) \\
& = F(0,\xi)+\int_{0}^{t} F_{t}(s, X_{s-}) ds+\int_0^t F_x(s, X_{s-}) R(s,0) \, dW_s \\
& +  \int_{0}^{t} F_x(s, X_{s-}) (B(s)+R(s,0) \goth{m}) \, ds   + \frac{1}{2} \int_{0}^{t}  \mbox{Tr}_{R(s, 0)} \left( F_{x x}(s, X_{s-})\right)  ds  \\ 
   & + \int_{0}^{t}\!\! \int_{B_{\rho'}(1)^c} \left[ F(s,X_{s-}+R(s,f)f) - F(s,X_{s-}) \right] \, N(ds,df) \\
   & +  \int_{0}^{t}\!\! \int_{B_{\rho'}(1)} \left[ F(s,X_{s-}+R(s,f)f) - F(s,X_{s-}) \right] \, \tilde{N}(ds,df) \\
   & +  \int_{0}^{t}\!\! \int_{B_{\rho'}(1)} \left[ F(s,X_{s-}+R(s,f)f) - F(s,X_{s-})-F_{x}(s,X_{s-})(R(s,f)f) \right] \,\nu(df)ds.
\end{flalign*}
The above formula generalizes that in the finite-dimensional spaces (see e.g. \cite{ApplebaumLPSC}) to the context of spaces of distributions (or more generally, duals of reflexive nuclear spaces).  

The organization of the paper is the following. In Section \ref{sectPrelim} we list some important notions on nuclear spaces and their duals, properties of cylindrical and
stochastic processes,  and differentiation in the dual of a nuclear space. In Section \ref{secContAndDisconCMVM} we recall basic properties of cylindrical martingale-valued measures and introduce some new results on its decomposition in continuous and purely discontinuous parts. Section \ref{sectionGeneItoProcesses} is devolted to the construction of the generalized It\^{o} processess defined with respect to a cylindrical martingale-valued measure. The corresponding It\^{o}'s formula for these processes is proved in Section \ref{sectItoFormulaCMVM}. The proof of  It\^{o}'s formula  for L\'{e}vy-It\^{o} processes taking values in the dual of a nuclear space is carried out in Section \ref{sectItoFormLevyProcess}. Finally, in Section \ref{sectExamples} we give some practical examples of our definition of L\'evy-It\^{o} process and It\^{o}'s formula.

\section{Preliminaries}\label{sectPrelim}

\subsection{Locally convex spaces and linear operators}\label{sectionLCS}

Let $\Phi$ be a locally convex space (we will only consider vector spaces over $\R$). $\Phi$ is \emph{quasi-complete} if each bounded and closed subset of it is complete. $\Phi$ is called  \emph{bornological} (respectively \emph{ultrabornological}) if it is the inductive limit of a family of normed (respectively Banach)  spaces. A \emph{barreled space} is a locally convex space such that every convex, balanced, absorbing and closed subset is a neighborhood of zero. For further details see \cite{Jarchow, Schaefer, Treves}.   

If $p$ is a continuous semi-norm on $\Phi$ and $r>0$, the closed ball of radius $r$ of $p$ given by $B_{p}(r) = \left\{ \phi \in \Phi: p(\phi) \leq r \right\}$ is a closed, convex, balanced neighborhood of zero in $\Phi$. A continuous seminorm (respectively norm) $p$ on $\Phi$ is called \emph{Hilbertian} if $p(\phi)^{2}=Q(\phi,\phi)$, for all $\phi \in \Phi$, where $Q$ is a symmetric, non-negative bilinear form (respectively inner product) on $\Phi \times \Phi$. For any given continuous seminorm $p$ on $\Phi$ let $\Phi_{p}$ be the Banach space that corresponds to the completion of the normed space $(\Phi / \mbox{ker}(p), \tilde{p})$, where $\tilde{p}(\phi+\mbox{ker}(p))=p(\phi)$ for each $\phi \in \Phi$. If $\Phi_{p}$ is separable then we say that $p$ is \emph{separable}.
We denote by  $\Phi'_{p}$ the Banach space dual to $\Phi_{p}$ and by $p'$ the corresponding dual norm. Observe that if $p$ is Hilbertian then $\Phi_{p}$ and $\Phi'_{p}$ are Hilbert spaces.

 The canonical inclusion from $\Phi$ into $(\Phi / \mbox{ker}(p), \tilde{p})$ has a continuous linear extension that we denote by $i_p$. If $q$ is another continuous seminorm on $\Phi$ for which $p \leq q$, we have that $\mbox{ker}(q) \subseteq \mbox{ker}(p)$ and the inclusion map from $\Phi / \mbox{ker}(q)$ into $\Phi / \mbox{ker}(p)$ has a unique continuous and linear extension that we denote by $i_{p,q}:\Phi_{q} \rightarrow \Phi_{p}$. Furthermore, we have the following relation: $i_{p}=i_{p,q} \circ i_{q}$.

Let $p$ and $q$ be continuous Hilbertian semi-norms on $\Phi$ such that $p \leq q$.
The space of continuous linear operators (respectively Hilbert-Schmidt operators) from $\Phi_{q}$ into $\Phi_{p}$ is denoted by $\mathcal{L}(\Phi_{q},\Phi_{p})$ (respectively $\mathcal{L}_{2}(\Phi_{q},\Phi_{p})$) and the operator norm (respectively Hilbert-Schmidt norm) is denote by $\norm{\cdot}_{\mathcal{L}(\Phi_{q},\Phi_{p})}$ (respectively $\norm{\cdot}_{\mathcal{L}_{2}(\Phi_{q},\Phi_{p})}$).

We denote by $\Phi'$ the topological dual of $\Phi$ and by $\inner{f}{\phi}$ the canonical pairing of elements $f \in \Phi'$, $\phi \in \Phi$. Unless otherwise specified, $\Phi'$ will always be consider equipped with its \emph{strong topology}, i.e. the topology on $\Phi'$ generated by the family of semi-norms $( \eta_{B} )$, where for each $B \subseteq \Phi$ bounded we have $\eta_{B}(f)=\sup \{ \abs{\inner{f}{\phi}}: \phi \in B \}$ for all $f \in \Phi'$.  Recall that $\Phi$ is called \emph{semi-reflexive} if the canonical (algebraic) embedding of $\Phi$ into $\Phi''$ is onto, and is called \emph{reflexive} if the canonical embedding is indeed an isomorphism (of topological vector spaces).


Let us recall that a (Hausdorff) locally convex space $(\Phi,\mathcal{T})$ is called \emph{nuclear} if its topology $\mathcal{T}$ is generated by a family $\Pi$ of Hilbertian semi-norms such that for each $p \in \Pi$ there exists $q \in \Pi$, satisfying $p \leq q$ and the canonical inclusion $i_{p,q}: \Phi_{q} \rightarrow \Phi_{p}$ is Hilbert-Schmidt. Other equivalent definitions of nuclear spaces can be found in \cite{Pietsch, Treves}. Recall that any quasi-complete bornological nuclear space is barreled and semi-reflexive, therefore is reflexive (see Theorem IV.5.6 in \cite{Schaefer}, p.145).

The following are all examples of complete, ultrabornological (hence barrelled) nuclear spaces: 
the spaces of functions $\mathscr{E}_{K} \defeq \mathcal{C}^{\infty}(K)$ ($K$: compact subset of $\R^{d}$) and $\mathscr{E}\defeq \mathcal{C}^{\infty}(\R^{d})$, the rapidly decreasing functions $\mathscr{S}(\R^{d})$, and the space of test functions $\mathscr{D}(U) \defeq \mathcal{C}_{c}^{\infty}(U)$ ($U$: open subset of $\R^{d}$), as well are the spaces of distributions $\mathscr{E}'_{K}$, $\mathscr{E}'$, $\mathscr{S}'(\R^{d})$, and $\mathscr{D}'(U)$. Other examples are the space of harmonic functions $\mathcal{H}(U)$ ($U$: open subset of $\R^{d}$), the space of polynomials $\mathcal{P}_{n}$ in $n$-variables and the space of real-valued sequences $\R^{\N}$ (with direct product topology). For references see \cite{Pietsch, Schaefer, Treves}.

\subsection{Cylindrical and stochastic processes}

Throughout this work we assume that $\ProbSpace$ is a complete probability space and consider a filtration $( \mathcal{F}_{t} : t \geq 0)$ on $\ProbSpace$ that satisfies the \emph{usual conditions}, i.e. it is right continuous and $\mathcal{F}_{0}$ contains all subsets of sets of $\mathcal{F}$ of $\Prob$-measure zero. We denote by $L^{0} \ProbSpace$ the space of equivalence classes of real-valued random variables defined on $\ProbSpace$. We always consider the space $L^{0} \ProbSpace$ equipped with the topology of convergence in probability and in this case it is a complete, metrizable, topological vector space. 
We denote by $\mathcal{P}_{\infty}$ the predictable $\sigma$-algebra on $[0, \infty) \times \Omega$ and for any $T>0$, we denote by $\mathcal{P}_{T}$ the restriction of $\mathcal{P}_{\infty}$ to $[0,T] \times \Omega$.  

Let $\Phi$ be a locally convex space. A \emph{cylindrical random variable}\index{cylindrical random variable} in $\Phi'$ is a linear map $X: \Phi \rightarrow L^{0} \ProbSpace$ (see \cite{FonsecaMora:Existence}). If $X$ is a cylindrical random variable in $\Phi'$, we say that $X$ is \emph{$n$-integrable} ($n \in \N$)  if $ \Exp \left( \abs{X(\phi)}^{n} \right)< \infty$, $\forall \, \phi \in \Phi$, and has \emph{mean-zero} if $ \Exp \left( X(\phi) \right)=0$, $\forall \phi \in \Phi$. 

Let $X$ be a $\Phi'$-valued random variable, i.e. $X:\Omega \rightarrow \Phi'$ is a $\mathscr{F}/\mathcal{B}(\Phi')$-measurable map. For each $\phi \in \Phi$ we denote by $\inner{X}{\phi}$ the real-valued random variable defined by $\inner{X}{\phi}(\omega) \defeq \inner{X(\omega)}{\phi}$, for all $\omega \in \Omega$. The linear mapping $\phi \mapsto \inner{X}{\phi}$ is called the \emph{cylindrical random variable induced/defined by} $X$. We will say that a $\Phi'$-valued random variable $X$ is (weakly) \emph{$n$-integrable} if the cylindrical random variable induced by $X$ is \emph{$n$-integrable}. 
 
Let $J=\R_{+} \defeq [0,\infty)$ or $J=[0,T]$ for  $T>0$. We say that $X=( X_{t}: t \in J)$ is a \emph{cylindrical process} in $\Phi'$ if $X_{t}$ is a cylindrical random variable for each $t \in J$. Clearly, any $\Phi'$-valued stochastic processes $X=( X_{t}: t \in J)$ \emph{induces/defines} a cylindrical process under the prescription: $\inner{X}{\phi}=( \inner{X_{t}}{\phi}: t \in J)$, for each $\phi \in \Phi$. 

If $X$ is a cylindrical random variable in $\Phi'$, a $\Phi'$-valued random variable $Y$ is called a \emph{version} of $X$ if for every $\phi \in \Phi$, $X(\phi)=\inner{Y}{\phi}$ $\Prob$-a.e. A $\Phi'$-valued process $Y=(Y_{t}:t \in J)$ is said to be a $\Phi'$-valued \emph{version} of the cylindrical process $X=(X_{t}: t \in J)$ on $\Phi'$ if for each $t \in J$, $Y_{t}$ is a $\Phi'$-valued version of $X_{t}$.  

For a $\Phi'$-valued process $X=( X_{t}: t \in J)$ terms like continuous, c\`{a}dl\`{a}g, purely discontinuous, adapted, predictable, etc. have the usual (obvious) meaning. 

A $\Phi'$-valued random variable $X$ is called \emph{regular} if there exists a weaker countably Hilbertian topology $\theta$ on $\Phi$ (see Section 2 in \cite{FonsecaMora:Existence}) such that $\Prob( \omega: X(\omega) \in (\widehat{\Phi}_{\theta})')=1$; here $\Phi_{\theta}$ denotes the space $(\Phi,\theta)$ and $\widehat{\Phi}_{\theta}$ denotes its completion. If $\Phi$ is barrelled, the property of being regular is  equivalent to the property that the law of $Y$ is a Radon measure on $\Phi'$ (see Theorem 2.10 in \cite{FonsecaMora:Existence}). A $\Phi'$-valued process $Y=(Y_{t}:t \in J)$ is said to be \emph{regular} if $Y_{t}$ is a regular random variable for each $t \in J$. 

A cylindrical process $Y=(Y_{t}:t \in J)$ in $\Phi'$ is a cylindrical martingale (respectively a cylindrical semimartingale) if $Y(\phi)=(Y_{t}(\phi): t \in J)$ is a real-valued martingale (respectively semimartingale) for each $\phi \in \Phi$. A $\Phi'$-valued process is a martingale (respectively a semimartingale) if the induced cylindrical process is a cylindrical martingale (respectively a cylindrical semimartingale)  in  $\Phi'$.

\subsection{Differentiation in the dual of a nuclear space}

Assume that $\Psi$ is a complete  bornological nuclear space. In this section we briefly recall the theory of differentiation for mappings defined on the strong dual $\Psi'$. We were not able to find an specific reference of this theory under our assumptions for $\Psi$, so we will first recall some key properties of $\Psi$ and then relate them with the general theory of differentiation in locally convex spaces (as for example in \cite{Yamamuro:1974}).  

Our assumptions on $\Psi$ show that $\Psi$ is a Montel space (see \cite{Schaefer}), so $\Psi$ is reflexive and its dual $\Psi'$ is barrelled (thus is equipped with its Mackey topology). Furthermore, being a complete nuclear space we know $\Psi$ is a projective system of Hilbert spaces, hence by Theorem IV.4.4 in \cite{Schaefer} we have $\Psi'$ is a inductive limit of Hilbert spaces. To be precise, if $\Pi$ is a collection of continuous Hilbertian seminorms on $\Psi$ generating its topology, then $\Psi$ is the projective limit $\displaystyle \lim_{\longleftarrow} (\Psi_{p}, i_{p,q})$ and the strong dual $\Psi'$ is the inductive limit $\displaystyle \lim_{\longrightarrow} (\Psi'_{p}, i'_{p,q})$.

We follow the definition of differentiation in \cite{Yamamuro:1974}. Let $F: D \subseteq \Psi' \rightarrow \Gamma$, where $D$ is open and $\Gamma$ is a locally convex space. We say that $F$ is \emph{Fr\'{e}chet differentiable}  at $\varphi \in D$ if there exists $\nu \in \mathcal{L}(\Psi',\Gamma)$ such that for every bounded subset $K$ of $\Psi'$, uniformly with respect to every $g \in K$, we have 
$$ \lim_{\epsilon \searrow 0} \epsilon^{-1} \left( F(\varphi +\epsilon g) -F(\varphi)-\epsilon \nu(g) \right) = 0. $$
We denote $\nu$ by $F'(\varphi)$. 

Since $\Psi' = \displaystyle \lim_{\longrightarrow} (\Psi'_{p}, i'_{p,q})$, by the result (1.6.1) in \cite{Yamamuro:1974}, $F$ is Fr\'{e}chet differentiable at $\varphi \in D$ if and only if the restriction $\restr{F}{\Psi'_{p}}$ of $F$ to $\Psi'_{p}$ is differentiable for each $p \in \Pi$ such that $\varphi \in \Psi'_{p}$. Furthermore, $\left( \restr{F}{\Psi'_{p}} \right)'=\restr{F'}{\Psi'_{p}}$. If $\Gamma$ is a Banach space, then $\left( \restr{F}{\Psi'_{p}} \right)'$ coincides with the usual definition of Fr\'{e}chet derivative between normed spaces. 

We say that $F$ is \emph{twice Fr\'{e}chet differentiable} at $\varphi \in D$ if $F$ is Fr\'{e}chet differentiable on $D$ and the mapping $F': D \rightarrow \mathcal{L}(\Psi', \Gamma)$ is Fr\'{e}chet differentiable at $\varphi$. We use the notation $F''(\varphi)$ for the second derivative of $F$ at $\varphi$. By definition $F''(\varphi) \in \mathcal{L}(\Psi', \mathcal{L}(\Psi',\Gamma))$, but for convenience we identify $F''(\varphi)$ with an element in $\mbox{Bil}(\Psi',\Psi';\Gamma)$ (the space of continuous bilinear forms on $\Psi' \times \Psi'$ with values in $\Gamma'$).

\section{Continuous and purely discontinuous part of a cylindrical martingale-valued measure}\label{secContAndDisconCMVM}

\begin{assumption} From now on $\Phi$ denotes a locally convex space and $\Psi$ denotes a complete bornological nuclear space. 
\end{assumption}

\begin{definition}\label{defiCMVM}
Let $U$ be a topological space and consider a ring $\mathcal{R}\subseteq \mathcal{B}(U)$ that generates $\mathcal{B}(U)$.  A \emph{cylindrical martingale-valued measure} on $\R_{+} \times \mathcal{R}$ is a collection $M=(M(t,A): t \geq 0, A \in \mathcal{R})$ of cylindrical random variables in $\Phi'$ such that:
\begin{enumerate}
\item $\forall \, A \in \mathcal{R}$, $M(0,A)(\phi)= 0$ $\Prob$-a.s., $\forall \phi \in \Phi$.
\item $\forall t \geq 0$, $M(t,\emptyset)(\phi)= 0$ $\Prob$-a.s. $\forall \phi \in \Phi$ and if $A, B \in \mathcal{R}$ are disjoint then 
$$M(t,A \cup B)(\phi)= M(t,A)(\phi) + M(t,B)(\phi) \, \Prob \mbox{-a.s.}, \quad \forall \phi \in \Phi.$$
\item $\forall \, A \in \mathcal{R}$, $(M(t,A): t \geq 0)$ is a cylindrical mean-zero square integrable c\`{a}dl\`{a}g martingale. 
\end{enumerate}
We say $M$ is \emph{orthogonal} if
\begin{enumerate}\setcounter{enumi}{3}
\item For disjoint $A, B \in \mathcal{R}$, $\Exp \left( M(t,A)(\phi) M(s,B)(\varphi) \right)=0$, for each $t,s \geq 0$, $\phi, \varphi \in \Phi$. 
\end{enumerate}
We say $M$ has \emph{independent increments} if
\begin{enumerate}\setcounter{enumi}{4}
\item For $0\leq s < t$, $(M(t,A)- M(s,A))(\phi)$ is independent of $\mathcal{F}_{s}$, for all $A \in \mathcal{R}$, $\phi \in \Phi$.
\end{enumerate}
Further, we say $M$ is \emph{nuclear} if 
\begin{enumerate}\setcounter{enumi}{5}
\item  For each $A \in \mathcal{R}$, $0 \leq s < t$, $\phi \in \Phi$,  
\begin{equation}\label{eqDefiNuclearCMVM}
\Exp \left( \abs{ (M(t,A)- M(s,A))(\phi)}^{2} \right) = \int_{s}^{t} \int_{A} q_{r,u}(\phi)^{2} \mu(du) dr ,
\end{equation} 
where $\mu$ is a $\sigma$-finite measure on $(U, \mathcal{B}(U))$ satisfying $\mu(A)< \infty$ $\forall \, A \in \mathcal{R}$, and $\{q_{r,u}: r \in \R_{+}, \, u \in U \}$ is a family of  continuous Hilbertian semi-norms on $\Phi$ such that for each $\phi$, $\varphi$ in $\Phi$, the map $(r,u) \mapsto q_{r,u}(\phi,\varphi)$ is $\mathcal{B}(\R_{+}) \otimes \mathcal{B}(U)/ \mathcal{B}(\R_{+})$-measurable and  bounded on $[0,T] \times U$ for all $T>0$. Here, $q_{r,u}(\cdot,\cdot)$ denotes the positive, symmetric, bilinear form associated to the Hilbertian semi-norm $q_{r,u}$.  
\end{enumerate}
\end{definition}

In \cite{FonsecaMora:SPDECMVM}, the author constructed a theory of stochastic integration with respect to the above class of (orthogonal, with independent increments, nuclear) cylindrical martingale-valued measures in a two step construction. First, a real-valued stochastic integral, known as the weak stochastic integral, is constructed via an It\^{o} isometry. Later, by using a procedure of regularization a vector-valued stochastic integral, known as the strong stochastic integral, is constructed using the weak stochastic integral as a building block. 

The basic class of (strong) stochastic integrands is the following: 
Let $\Lambda^{2}_{s}(\Psi, M, T)$ denote the collection of families $R=\{R(r,\omega,u): r \in [0,T], \omega \in \Omega, u \in U \}$ of operator-valued maps satisfying the following conditions:
\begin{enumerate}
\item $R(r,\omega,u) \in \mathcal{L}(\Phi'_{q_{r,u}},\Psi')$, for all $r \in [0, T]$, $\omega \in \Omega$, $u \in U$, 
\item $R$ is \emph{$q_{r,u}$-predictable}, i.e. for each $\phi \in \Phi$, $\psi \in \Psi$, the mapping $[0,T] \times \Omega \times U \rightarrow \R_{+}$ given by $(r,\omega,u) \mapsto q_{r,u}(R(r,\omega,u)' \psi, \phi)$ is $\mathcal{P}_{T} \otimes \mathcal{B}(U)$-measurable.
\item 
\begin{equation} \label{finiteSecondMomentIntegrandsStrongIntg}
\Exp \int_{0}^{T} \int_{U} q_{r,u}(R(r,u)'\psi)^{2} \mu(du) dr < \infty, \quad \forall \, \psi \in \Psi.
\end{equation} 
\end{enumerate}
If $R \in \Lambda^{2}_{s}(\Psi, M, T)$, Theorem 5.11 in  \cite{FonsecaMora:SPDECMVM} shows the existence of a $\Psi'$-valued mean-zero, square integrable, c\`{a}dl\`{a}g martingale 
$ \int_{0}^{t}\int_{U} \, R(r,u) \, M(dr,du)$, $0 \leq t \leq T$,
called the \emph{stochastic integral} of $R$ with respect to $M$. For every $\psi \in \Psi$, $\Prob$-a.e. $\forall t \in [0,T]$, we have the following \emph{weak-strong compatibility}:
\begin{equation}\label{eqWeakStrongCompLevyMartValMeas}
 \inner{\int^{t}_{0} \int_{U} R (r,f) M (dr, df)}{\psi}=\int^{t}_{0} \int_{U} R (r,f)'\psi M (dr, df),
\end{equation}
where the real-valued (weak) stochastic integral on the right-hand side of the above equality is defined by Theorem 4.7 in \cite{FonsecaMora:SPDECMVM}.  Below, familiarity with the theory of stochastic integration in the dual of a nuclear space will be assumed, the reader is referred to \cite{FonsecaMora:SPDECMVM} for the details.


As the next result shows, we can decompose $M$ as the sum of a cylindrical  martingale-valued measure $M^{c}$ with continuous paths and  a cylindrical  martingale-valued measure $M^d$ with purely discontinuous paths. Recall that any real-valued  martingale $m$ can be written uniquely as $m = m^{c}+ m^{d}$ where $m^{c}$ is a continuous martingale with $m^{c}_{0}=0$  and $m^{d}$ is a  purely discontinuous martingale (see e.g. Theorem 10.3.4 in \cite{CohenElliott}). 

\begin{lemma}
Let $M$ be a cylindrical martingale-valued measure on $\R_{+} \times \mathcal{R}$. There exists two  cylindrical martingale-valued measures $M^{c}$ and $M^{d}$ on $\R_{+} \times \mathcal{R}$ with the following properties:
\begin{enumerate}
\item $M(t,A)=M^{c}(t,A)+M^{d}(t,A)$ $\forall t\geq 0$, $A \in \mathcal{R}$.  
\item For every $A \in \mathcal{R}$ and $\phi \in \Phi$, $(M^{c}(t,A)(\phi): t \geq 0)$ has continuous paths and $(M^{d}(t,A)(\phi): t \geq 0)$ has purely discontinuous paths. 
\item For every $A \in \mathcal{R}$ and $\phi \in \Phi$, the square integrable martingales $(M^{c}(t,A)(\phi): t \geq 0)$ and $(M^{d}(t,A)(\phi): t \geq 0)$ are orthogonal.   
\end{enumerate}
\end{lemma}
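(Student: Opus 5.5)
The plan is to apply the real-valued decomposition pointwise in $(A,\phi)$ and then verify that the resulting families are linear in $\phi$ and finitely additive in $A$. The tool for this is uniqueness of the decomposition $m=m^{c}+m^{d}$ for square integrable martingales (Theorem 10.3.4 in \cite{CohenElliott}).

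\emph{Definition.} For each $A \in \mathcal{R}$ and $\phi \in \Phi$, the process $(M(t,A)(\phi): t \geq 0)$ is a real-valued mean-zero square integrable c\`adl\`ag martingale. Write it uniquely (up to indistinguishability) as $M(t,A)(\phi)=m^{c}_{A,\phi}(t)+m^{d}_{A,\phi}(t)$. Here $m^{c}_{A,\phi}$ is a continuous martingale with $m^{c}_{A,\phi}(0)=0$ and $m^{d}_{A,\phi}$ is a purely discontinuous martingale. Both parts are square integrable, since the decomposition is orthogonal in $\mathcal{M}^{2}$ and so $\Exp\abs{m^{c}_{A,\phi}(t)}^{2}+\Exp\abs{m^{d}_{A,\phi}(t)}^{2}=\Exp\abs{M(t,A)(\phi)}^{2}$. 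Set $M^{c}(t,A)(\phi)\defeq m^{c}_{A,\phi}(t)$ and $M^{d}(t,A)(\phi)\defeq m^{d}_{A,\phi}(t)$. Property (1) then holds by construction. Property (2) holds by construction. Property (3) is the standard fact that a continuous square integrable martingale is orthogonal (strongly, hence also in $L^{2}$) to any purely discontinuous one.

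\emph{Linearity in $\phi$.} For $\phi,\varphi\in\Phi$ and $a,b\in\R$, linearity of $M(t,A)$ gives $M(t,A)(a\phi+b\varphi)=aM(t,A)(\phi)+bM(t,A)(\varphi)$ a.s. for each $t$. By right-continuity of paths this holds for all $t$ simultaneously outside a null set, so the two processes are indistinguishable. The process $a m^{c}_{A,\phi}+b m^{c}_{A,\varphi}$ is continuous and null at zero, and $a m^{d}_{A,\phi}+b m^{d}_{A,\varphi}$ is purely discontinuous, because the purely discontinuous martingales form a closed subspace. Uniqueness therefore yields $M^{c}(t,A)(a\phi+b\varphi)=aM^{c}(t,A)(\phi)+bM^{c}(t,A)(\varphi)$ a.s., and likewise for $M^{d}$. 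Each $M^{c}(t,A)$ and $M^{d}(t,A)$ is thus a linear map $\Phi\to L^{0}\ProbSpace$, i.e.\ a cylindrical random variable.

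\emph{Remaining axioms of Definition \ref{defiCMVM}.} The same uniqueness argument, applied to $M(\cdot,A\cup B)(\phi)=M(\cdot,A)(\phi)+M(\cdot,B)(\phi)$ for disjoint $A,B$, gives finite additivity of $M^{c}(t,\cdot)$ and $M^{d}(t,\cdot)$. Since $M(\cdot,\emptyset)(\phi)=0$, uniqueness also gives that both parts vanish at $\emptyset$. At $t=0$ we have $m^{c}(0)=0$ by convention, and $m^{d}(0)=M(0,A)(\phi)-m^{c}(0)=0$. For the mean-zero property, square integrable martingales with zero initial value have zero mean. So both families satisfy conditions (1)--(3) of Definition \ref{defiCMVM}.

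\emph{Main obstacle.} I expect the only delicate point to be bookkeeping with null sets. Every identity above holds only $\Prob$-a.s. for fixed $(A,\phi)$, with the exceptional set depending on $(A,\phi)$, and these sets must never be combined uncountably. This is consistent with the definitions, since cylindrical random variables are maps into equivalence classes and Definition \ref{defiCMVM} only requires a.s. identities for each fixed $\phi$. I will therefore state each identity for fixed arguments and pass to indistinguishability using c\`adl\`ag paths, without attempting to choose versions simultaneously for all $\phi$.
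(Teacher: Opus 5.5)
Your proposal is correct and follows essentially the same route as the paper. You define $M^{c}$ and $M^{d}$ by applying the real-valued decomposition for each fixed $(A,\phi)$, and you get linearity in $\phi$ and finite additivity in $A$ from uniqueness of that decomposition. The only differences are minor: the paper proves $M^{c}(t,\emptyset)(\phi)=0$ through the identity $0=\Exp[M^{d}(t,\emptyset)(\phi)M^{c}(t,\emptyset)(\phi)]=-\Exp[M^{c}(t,\emptyset)(\phi)^{2}]$ rather than by uniqueness. You are also more explicit than the paper about square integrability of the two parts, the mean-zero property, and the handling of null sets.
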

\begin{proof}
For every $t\geq 0$, $A \in \mathcal{R}$ and $\phi \in \Phi$, define  
$$
M^{c}(t,A)(\phi)=[M(t,A)(\phi)]^{c}, \quad M^{d}(t,A)(\phi)=[M(t,A)(\phi)]^{d}.
$$
We must prove that the families $M^{c}=(M^{c}(t,A): t \geq 0, A \in \mathcal{R})$ and $M^{d}=(M^{d}(t,A): t \geq 0, A \in \mathcal{R})$ are cylindrical martingale-valued measures.

First, since $M(0,A)(\phi)= 0$ and by construction $M^{c}(0,A)(\phi)= 0$, we must have $M^{d}(0,A)(\phi)= 0$. Second, since $M(t,\emptyset)(\phi)= 0$, we have $M^{d}(t,\emptyset)(\phi)= -M^{c}(t,\emptyset)(\phi)$, but since $M^{d}(t,\emptyset)(\phi)$ is purely discontinuous, 
we have 
$$0= \Exp \left[ M^{d}(t,\emptyset)(\phi) M^{c}(t,\emptyset)(\phi)\right]  = - \Exp \left[ M^{c}(t,\emptyset)(\phi)M^{c}(t,\emptyset)(\phi)\right], $$
which shows  $M^{c}(t,\emptyset)(\phi)=0$. This in turn implies $M^{d}(t,\emptyset)(\phi)=0$. If $A, B \in \mathcal{R}$ are disjoint then 
\begin{eqnarray*}
M(t,A \cup B)(\phi)  & = & M(t,A)(\phi) + M(t,B)(\phi) \\
& = &  \left[ M^{c}(t,A)(\phi) + M^{c}(t,B)(\phi) \right] + \left[ M^{d}(t,A)(\phi) + M^{d}(t,B)(\phi) \right].
\end{eqnarray*}
The first two terms in the right-hand side of the last line of the above equality are continuous, and the other two are purely discontinuous, the uniqueness of the representation gives us
$$ M^{c}(t,A \cup B)(\phi) = M^{c}(t,A)(\phi) + M^{c}(t,B)(\phi), $$
and likewise for $M^{d}$. Third, we must prove that $\phi \mapsto M^{c}(t,A)(\phi)$ and $\phi \mapsto M^{d}(t,A)(\phi)$ are linear mappings. 
As before, this can be proved from the equality
\begin{flalign*}
& M(t, A)(\alpha \phi_1 + \phi_2)  \\
& = \left[ \alpha M^c(t, A)(\phi_1) + M^c(t, A)(\phi_2) \right] + \left[ \alpha M^d(t, A)(\phi_1) + M^d(t, A)(\phi_2) \right]. 
\end{flalign*}
and arguments that use the uniqueness of the decomposition. Therefore, $(M^{c}(t,A): t \geq 0)$ and $(M^{d}(t,A): t \geq 0)$ are cylindrical mean-zero square integrable martingales.
\end{proof}

\begin{remark} For each $A, B \in \mathcal{R}$, $t,s \geq 0$ and $\phi, \varphi \in \Phi$, from the fact that $M^{d}(\cdot,A)(\phi)$ and $M^{d}(\cdot,B)(\varphi)$ are purely discontinuous,  we have 
\begin{multline}\label{eqSquareMomenMcAndMd}
\Exp \left( M(t,A)(\phi) M(s,B)(\varphi) \right) \\
 =  \Exp \left( M^{c}(t,A)(\phi) M^{c}(s,B)(\varphi) \right)+ \Exp \left( M^{d}(t,A)(\phi) M^{d}(s,B)(\varphi) \right).
\end{multline}
From the above we conclude that if $M^{c}$ and $M^{d}$ are orthogonal, so  is $M$. The converse is not clear to be true in general.  
\end{remark}

\begin{assumption}\label{assumOnMcAndMd}
From  now on we assume the following:
\begin{enumerate}
\item $M$ is a cylindrical martingale-valued measure on $\R_{+} \times \mathcal{R}$. 
\item Both $M^{c}$ and $M^{d}$ are orthogonal and have independent increments. 
\item Both $M^{c}$ and $M^{d}$ are nuclear  satisfying \eqref{eqDefiNuclearCMVM} with respect to the same measure $\mu$ and for families of seminorms  $\{q^{c}_{r,u}: r \in \R_{+}, \, u \in U \}$ for $M^{c}$ and  $\{q^{d}_{r,u}: r \in \R_{+}, \, u \in U \}$ for $M^{d}$.  
\end{enumerate}
\end{assumption}

From the above assumption we have that $M$ is orthogonal and has independent increments. Moreover, taking $A=B$, $s=t$ and $\phi=\varphi$ in \eqref{eqSquareMomenMcAndMd}, we conclude from \eqref{eqDefiNuclearCMVM} for $M^{c}$ and $M^{d}$ that $M$ is nuclear with corresponding family of seminorms  
\begin{equation}\label{eqDecomSeminormsMcAndMd}
q_{r,u}(\phi)^{2} = q^{c}_{r,u}(\phi)^{2}+ q^{d}_{r,u}(\phi)^{2}, \quad \forall \,  r \in \R_{+}, \, u \in U .
\end{equation}

\begin{proposition}\label{propStochIntegralSumContAndDiscParts}
Assume $R \in \Lambda^{2}_{s}(\Psi, M, T)$ and denote by $j_{c}:\Phi'_{q^{c}_{r,u}} \rightarrow \Phi'_{q_{r,u}}$ and $j_{d}: \Phi'_{q^{d}_{r,u}} \rightarrow \Phi'_{q_{r,u}}$ the canonical inclusions. Then, $\{R(r,\omega, u)  j_{c} \} \in \Lambda^{2}_{s}(\Psi, M^{c}, T)$ and $\{R(r,\omega, u)  j_{d} \} \in \Lambda^{2}_{s}(\Psi, M^{d}, T)$. Moreover, for all $0 \leq t \leq T$, 
\begin{equation}\label{eqDecompStochastInteContinuAndPurelyDiscon}
\int_{0}^{t}\int_{U} \, R(r,u) \, M(dr,du)= \int_{0}^{t}\int_{U} \, R(r,u)j_{c} \, M^{c}(dr,du)+ \int_{0}^{t}\int_{U} \, R(r,u)j_{d} \, M^{d}(dr,du). 
\end{equation}
\end{proposition}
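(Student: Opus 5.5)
Since $q_{r,u}(\phi)^{2} = q^{c}_{r,u}(\phi)^{2}+ q^{d}_{r,u}(\phi)^{2}$ by \eqref{eqDecomSeminormsMcAndMd}, we have $q^{c}_{r,u}\le q_{r,u}$ and $q^{d}_{r,u}\le q_{r,u}$. Hence the canonical inclusions $j_{c}$ and $j_{d}$ are well-defined contractions, obtained as the duals of the maps $i_{q^c,q}:\Phi_{q}\to\Phi_{q^c}$ and $i_{q^d,q}:\Phi_{q}\to\Phi_{q^d}$. The first step is to check that $R j_c\in \Lambda^{2}_{s}(\Psi, M^{c}, T)$, and likewise for $Rj_d$.

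\emph{Membership.} Condition (1) holds because $R(r,\omega,u)j_c \in \mathcal{L}(\Phi'_{q^c_{r,u}},\Psi')$. For the predictability condition, I would compute $(Rj_c)'\psi=j_c'R'\psi$, where $j_c'$ is the canonical map $\Phi_{q}\to\Phi_{q^c}$. This gives $q^c_{r,u}((Rj_c)'\psi,\phi)=q^c_{r,u}(R'\psi,\phi)$. Predictability of $q$ should then transfer to $q^c$, for example by approximating $R'\psi$ in $\Phi_{q_{r,u}}$ with elements of $\Phi$ in a measurable way, or by using that $q^c_{r,u}(\varphi,\phi)$ is measurable for $\varphi,\phi\in\Phi$. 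I expect this to be the main technical obstacle, because $R'\psi$ lives in the completion $\Phi_{q_{r,u}}$, which varies with $(r,u)$. I would handle it using the polarization/orthonormal-basis arguments of \cite{FonsecaMora:SPDECMVM} used there for such measurability. The moment condition follows directly from $q^c_{r,u}(j_c'R'\psi)^2\le q_{r,u}(R'\psi)^2$ and \eqref{finiteSecondMomentIntegrandsStrongIntg}.

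\emph{Equality of integrals.} All three stochastic integrals are $\Psi'$-valued c\`adl\`ag processes, and $\Psi$ is separable in the intended setting. It therefore suffices to prove, for each $\psi\in\Psi$, that
\[ \int_0^t\!\!\int_U R'\psi \, M(dr,du)=\int_0^t\!\!\int_U j_c'R'\psi\, M^c(dr,du)+\int_0^t\!\!\int_U j_d'R'\psi\, M^d(dr,du) \]
almost surely for all $t$. Here weak-strong compatibility \eqref{eqWeakStrongCompLevyMartValMeas} is used on each term, and indistinguishability in $\Psi'$ follows via a countable dense set (or via regularity and the c\`adl\`ag property).

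To prove the real-valued identity, I would first check it for simple integrands $\sum \mathbbm{1}_{(s,t]}\mathbbm{1}_F\mathbbm{1}_A\phi$. In that case it reduces to $M(t,A)(\phi)-M(s,A)(\phi)=(M^c+M^d)(t,A)(\phi)-(M^c+M^d)(s,A)(\phi)$, which is the preceding Lemma. I would then extend by density using the It\^o isometries. The weak integral with respect to $M$ is isometric in $L^2(q_{r,u}\,\mu\,dr\,d\Prob)$, and the maps $h\mapsto j_c'h$ and $h\mapsto j_d'h$ are contractions into the corresponding $q^c$ and $q^d$ spaces. Both sides are therefore continuous linear maps from the weak integrand space for $M$ into square-integrable martingales, and they agree on a dense set. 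Density of simple integrands in the $M$-integrand space is taken from Theorem 4.7 of \cite{FonsecaMora:SPDECMVM}, and a localization handles general $R$ if necessary.
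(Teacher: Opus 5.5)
Your proposal is correct and follows essentially the same route as the paper. The bounds $\|j_c\|,\|j_d\|\le 1$ from \eqref{eqDecomSeminormsMcAndMd} give membership in the integrand classes. The identity is then reduced to the real-valued weak identity; the paper does this via the uniqueness part of Theorem 5.11 in \cite{FonsecaMora:SPDECMVM}, which matches your regularity alternative. That weak identity is checked on simple integrands using the preceding Lemma and extended by the It\^o isometry. The paper simply asserts the transfer of predictability from $q_{r,u}$ to $q^{c}_{r,u}$ and $q^{d}_{r,u}$ that you flag as the main technical point, so your treatment of it is, if anything, more careful.
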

\begin{proof} Observe first that by \eqref{eqDecomSeminormsMcAndMd} we have $\norm{j_{c}}_{\mathcal{L}(\Phi'_{q^{c}_{r,u}}, \Phi'_{q_{r,u}})} \leq 1$, $\norm{j_{d}}_{\mathcal{L}(\Phi'_{q^{d}_{r,u}},  \Phi'_{q_{r,u}})} \leq 1$. From this one can deduce that $\{R(r,\omega, u)  j_{c} \} \in \Lambda^{2}_{s}(\Psi, M^{c}, T)$ and $\{R(r,\omega, u)  j_{d} \} \in \Lambda^{2}_{s}(\Psi, M^{d}, T)$. To prove \eqref{eqDecompStochastInteContinuAndPurelyDiscon}, by Theorem 5.11 in \cite{FonsecaMora:SPDECMVM} it is enough to show that for every $\psi \in \Psi$ we have 
\begin{multline*}
\int_{0}^{t}\int_{U} \, R(r,u)' \psi \, M(dr,du) \\
= \int_{0}^{t}\int_{U} \, j'_{c} R(r,u)' \psi \, M^{c}(dr,du)+ \int_{0}^{t}\int_{U} \,  j'_{d} R(r,u)' \psi \, M^{d}(dr,du). 
\end{multline*}
where the above are all (real-valued) stochastic integrals (see Theorem 4.7 in \cite{FonsecaMora:SPDECMVM}). 
The proof of the above identity can be carried out by standard arguments checking that it holds for simple integrands and then extending the identity by an approximation procedure (see the proof of Proposition 4.11 in \cite{FonsecaMora:SPDECMVM}). 
\end{proof}

\section{Generalized It\^{o} processes}\label{sectionGeneItoProcesses}

In this section  our main objective is to give a proper definition for a $\Psi'$-valued process of the form 
$$ dX_{t}= B(t) dt + \int_{U} R(t,u) M(dt,du). $$
It is natural to call $B$ as the \emph{drift coefficient} and to $R$ as the \emph{diffusion coefficient}.

\begin{assumption}\label{assumItoProcess} From now on we assume the following:
\begin{enumerate}
\item $\xi$ is a $\Psi'$-valued $\mathcal{F}_{0}$-measurable regular random variable. We assume further that $\xi$ is integrable, that is, $\Exp \left( \abs{\inner{\xi}{\psi}}\right) < \infty$,  $\forall \, \psi \in \Psi$. 
\item $B: \R_{+} \times \Omega \rightarrow \Psi'$ is such that for each $\psi \in \Psi$ the mapping $(r,\omega) \mapsto \inner{B(r,\omega)}{\psi}$ is progressively measurable. 
\item $R=\{R(r,\omega,u):  r \in \R_{+}, \omega \in \Omega, u \in U  \} $ is a family satisfying $R(r,\omega,u) \in \mathcal{L}(\Phi'_{q_{r,u}},\Psi')$ and is $q_{r,u}$-predictable, i.e. the mapping $(r,\omega,u) \mapsto q_{r,u}(R(r,\omega,u)' \psi, \phi)$ is  $\mathcal{P} \otimes \mathcal{B}(U)$-measurable for every $\psi \in \Psi$ and $\phi \in \Phi$.
\item There exists a sequence of bounded stopping times $(\sigma_{n}: n \in \N)$ increasing to $\infty$ such that 
\begin{equation}\label{eqLocalIntegCoeffiB}
 \Exp \int_{0}^{\sigma_{n}} \abs{\inner{B(r)}{\psi}} dr < \infty, \quad \forall \, \psi \in \Psi. 
\end{equation}
\begin{equation} 
\Exp \int_{0}^{\sigma_{n}} \int_{U} q_{r,u}(R(r,u)'\psi)^{2} \mu(du) \lambda(dr) < \infty, \quad \forall \, \psi \in \Psi.
\end{equation}
\end{enumerate}
\end{assumption}

With the assumptions above, one can define (see \cite{FonsecaMora:SPDELevy}, Section 3.2) the integral for the drift coefficient as a $\Psi'$-valued continuous process satisfying $\Prob$-a.e.
\begin{equation}\label{eqWeakStrongCompaLebesIntegral}
\inner{\int_{0}^{t} B(r)dr}{\psi} = \int_{0}^{t} \inner{B(r)}{\psi} dr, \quad \forall \, \psi \in \Psi .
\end{equation}
Likewise, for the diffusion coefficient we have that by Theorem 5.26 in \cite{FonsecaMora:SPDECMVM} there exists the stochastic integral process 
$$\left(  \int_{0}^{t} \int_{U} R(r,u) M(dr,du) : t \geq 0 \right)$$
 which is a $\Psi'$-valued locally square integrable c\`{a}dl\`{a}g martingale.

\begin{definition}\label{defiItoProcess}
With $B$ and $R$ as described above, we define the \emph{generalized It\^{o} process}:
$$X_{t}= \xi+\int_{0}^{t} B(r)dr+ \int_{0}^{t} \int_{U} R(r,u) M(dr,du), \quad t \geq 0. $$
\end{definition}

The process $(X_{t}: t \geq 0)$ is a $\Psi'$-valued adapted c\`{a}dl\`{a}g  semimartingale. But we can show that locally it its a c\`{a}dl\`{a}g semimartingale in a Hilbert space. 

\begin{theorem}\label{theoLocalHilbertSemima}
There exists an increasing sequence of continuous Hilbertian seminorms $(\varrho_{n}: n \in \N)$ on $\Psi$ with the property that for all $n \in \N$, $X^{\sigma_{n}}$ is a $\Psi'_{\varrho_{n}}$-valued c\`adl\`ag semimartingale. 
\end{theorem}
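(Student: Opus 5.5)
We handle the three summands of $X^{\sigma_n}_t = \xi + \int_0^{t\wedge\sigma_n} B(r)\,dr + \int_0^{t\wedge \sigma_n}\int_U R(r,u)\,M(dr,du)$ separately. For each summand we produce a continuous Hilbertian seminorm on $\Psi$ such that the summand has a c\`adl\`ag (or continuous) version in the corresponding Hilbert space $\Psi'_{p}$. We then take $\varrho_n$ to dominate all of these seminorms, and also $\varrho_{n-1}$, so that the sequence is increasing. Two facts make this work. First, since $\Psi$ is nuclear we may always enlarge a seminorm $p$ to a Hilbertian $q\geq p$ with $i_{p,q}$ Hilbert--Schmidt. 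Second, if $p\leq q$, the dual inclusion $i'_{p,q}:\Psi'_p\to\Psi'_q$ is continuous. Hence a c\`adl\`ag process in $\Psi'_p$ is also c\`adl\`ag in $\Psi'_q$, and semimartingale properties transfer because the map is continuous linear.

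\emph{Initial condition.} The random variable $\xi$ is regular and $\Psi$ is barrelled (indeed ultrabornological), so by Theorem 2.10 in \cite{FonsecaMora:Existence} there is a weaker countably Hilbertian topology $\theta$ with $\xi\in(\widehat\Psi_\theta)'$ a.s. Since $(\widehat\Psi_\theta)' = \bigcup_k \Psi'_{p_k}$ for an increasing sequence of Hilbertian seminorms $p_k$, this already gives $\xi\in \Psi'_{p_k}$ for some random $k$. To obtain a single deterministic seminorm, we use integrability of $\xi$ instead. The map $\psi\mapsto \inner{\xi}{\psi}$ is continuous from $\Psi$ into $L^1$, by a closed graph / barrelledness argument. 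Hence there is a Hilbertian $p_0$ with $\Exp|\inner{\xi}{\psi}|\le p_0(\psi)$. Taking $q_0\ge p_0$ with $i_{p_0,q_0}$ Hilbert--Schmidt, a Minlos/Sazonov-type estimate (as in \cite{FonsecaMora:Existence}) yields $\xi\in\Psi'_{q_0}$ a.s.

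\emph{Drift term.} Condition \eqref{eqLocalIntegCoeffiB} shows that $\psi\mapsto \Exp\int_0^{\sigma_n}|\inner{B(r)}{\psi}|\,dr$ is a lower semicontinuous seminorm on the barrelled space $\Psi$, hence continuous. It is therefore dominated by a Hilbertian seminorm $p_1$, and we take $q_1\ge p_1$ Hilbert--Schmidt. Choosing an orthonormal basis $(\psi_j)$ of $\Psi_{q_1}$, we get
\[
\Exp\int_0^{\sigma_n}\Big(\sum_j \inner{B(r)}{\psi_j}^2\Big)^{1/2}dr<\infty
\]
provided we also pass to a further Hilbert--Schmidt extension $q_1'$. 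For this we bound the $\ell^2$ norm by an $\ell^1$-type sum weighted by the Hilbert--Schmidt singular values. Consequently $B(r)\in \Psi'_{q_1'}$ for a.e.\ $r\le\sigma_n$, $B$ is Bochner integrable there, and $\int_0^{t\wedge\sigma_n}B\,dr$ is a continuous finite variation process in $\Psi'_{q_1'}$. It agrees with the $\Psi'$-valued version by \eqref{eqWeakStrongCompaLebesIntegral}.

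\emph{Martingale term, the main obstacle.} Here we follow the regularization argument behind Theorem 5.26 in \cite{FonsecaMora:SPDECMVM}. The map
\[
\psi\mapsto \Big(\Exp\int_0^{\sigma_n}\!\int_U q_{r,u}(R(r,u)'\psi)^2\mu(du)\,dr\Big)^{1/2}
\]
is a Hilbertian seminorm on $\Psi$, continuous by barrelledness; call it $p_2$. By the It\^o isometry for the weak integral and \eqref{eqWeakStrongCompLevyMartValMeas}, for each $\psi$ we have $\Exp\sup_{t}|\inner{Y^{\sigma_n}_t}{\psi}|^2\le 4p_2(\psi)^2$, where $Y$ is the stochastic integral. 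Taking $q_2\ge p_2$ with $i_{p_2,q_2}$ Hilbert--Schmidt and summing over an orthonormal basis of $\Psi_{q_2}$ gives
\[
\Exp\sup_t \norm{Y^{\sigma_n}_t}_{\Psi'_{q_2}}^2\le 4\norm{i_{p_2,q_2}}_{\mathcal L_2}^2<\infty .
\]
The delicate point is to obtain a c\`adl\`ag version in $\Psi'_{q_2}$ and not merely a bounded one. We would first show that the partial sums $\sum_{j\le N}\inner{Y^{\sigma_n}}{\psi_j}\psi'_j$ are c\`adl\`ag $\Psi'_{q_2}$-valued square integrable martingales. By Doob's inequality they converge uniformly in $L^2$, along a subsequence a.s.\ uniformly, so the limit is c\`adl\`ag in $\Psi'_{q_2}$ and coincides with $Y^{\sigma_n}$ as a $\Psi'$-valued process, by indistinguishability of c\`adl\`ag versions. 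Finally we set $\varrho_n=\max(\varrho_{n-1},q_0,q_1',q_2)$, made Hilbertian by taking the square root of the sum of squares. Then $X^{\sigma_n}$ is the sum of a $\Psi'_{\varrho_n}$-valued $\mathcal F_0$-measurable variable, a continuous finite variation process and a c\`adl\`ag square integrable martingale, hence a $\Psi'_{\varrho_n}$-valued c\`adl\`ag semimartingale.
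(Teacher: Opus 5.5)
Your proposal is correct and follows the same route as the paper. You regularize $\xi$, the stopped drift integral and the stopped stochastic integral separately into Hilbert spaces $\Psi'_{q}$, using continuity of the relevant seminorms on $\Psi$ plus Hilbert--Schmidt or nuclear enlargements, and then take an increasing dominating sequence $\varrho_{n}$. This is what the paper obtains by citing Lemma 6.11 and Theorem 5.11 of \cite{FonsecaMora:SPDECMVM}; you carry out the regularization by hand instead.

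One difference is worth flagging. Your Minlos argument gives only $\xi \in \Psi'_{q_{0}}$ a.s., whereas the paper records a version $\tilde{\xi}$ with $\Exp(\rho'(\tilde{\xi}))<\infty$, which your nuclear argument for the drift would also give. This integrability is not needed for the statement itself, but it is used later in Step 2 of the proof of Theorem \ref{theoItoFormula}.
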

\begin{proof}
Let $(T_{n}: n \in \N)$ be an increasing sequence of positive real numbers such that $\forall \, n \in \N$, $\sigma_{n} \leq T_{n}$ $\Prob$-a.e. and $T_{n} \nearrow \infty$. By Lemma 6.11 and  Theorem 5.11 in \cite{FonsecaMora:SPDECMVM} one can show the existence of an increasing sequence $(\varrho_{n}: n \in \N)$  of continuous Hilbertian seminorms on $\Psi$ with the following properties: 
\begin{enumerate}
\item For every $n \in \N$, on $[0,T_{n}]$ we have $B(r)$ has a $\Psi'_{\varrho_{n}}$-valued version $\tilde{B}_{n}(r)$ and 
$$ \Exp \int_{0}^{T_{n}} \mathbbm{1}_{[0,\sigma_{n}]}(r) \varrho_{n}'(\tilde{B}(r)) dr < \infty.$$
In particular,  $\int_{0}^{t} \, \mathbbm{1}_{[0,\sigma_{n}]}(r) \tilde{B}_{n}(r) dr$, $t \in [0,T_{n}]$, can be defined as a Bochner integral in $\Psi'_{\varrho_{n}}$ $\Prob$-a.e. Moreover, by a standard localization argument
 \begin{equation}\label{eqLocalDriftIntegral}
 \int_{0}^{t \wedge \sigma_{n} } B(r) dr = \int_{0}^{t} \, \mathbbm{1}_{[0,\sigma_{n}]}(r) \tilde{B}_{n}(r) dr, \quad \forall t \in [0,T_{n}]. 
 \end{equation}

\item  For every $n \in \N$ there exists a family 
$\tilde{R}_{n}=\{\tilde{R}_{n}(r,\omega,u)\}$ of linear operators $\tilde{R}_{n}(r,\omega,u) \in \mathcal{L}_{2}(\Phi'_{q_{r,u}},\Psi'_{\varrho_{n}})$, $r \in [0, T_{n}]$, $\omega \in \Omega$, $u \in U$, which are $q_{r,u}$-predictable, and for which 
\begin{equation} 
\Exp \int_{0}^{T_{n}} \int_{U} \mathbbm{1}_{[0,\sigma_{n}]}(r) \norm{\tilde{R}_{n}(r,u)}^{2}_{\mathcal{L}_{2}(\Phi'_{q_{r,u}},\Psi'_{\varrho_{n}})} \mu(du) dr < \infty,
\end{equation} 
and the stochastic integral 
$ \int_{0}^{t} \int_{U} \mathbbm{1}_{[0,\sigma_{n}]}(r)  \tilde{R}_{n}(r,u) M(dr,du)$ is a $\Phi'_{\varrho_{n}}$-valued mean-zero (strongly) square integrable martingale with c\`{a}dl\`{a}g paths satisfying 
\begin{multline}\label{eqItoIsometryStrongInte}
\Exp \left( \varrho_{n}'\left( \int_{0}^{t} \int_{U} \mathbbm{1}_{[0,\sigma_{n}]}(r)  \tilde{R}_{n}(r,u) M(dr,du) \right)^{2} \right) \\
= \Exp \int_{0}^{t} \int_{U} \mathbbm{1}_{[0,\sigma_{n}]}(r)  \norm{\tilde{R}_{n}(r,u)}^{2}_{\mathcal{L}_{2}(\Phi'_{q_{r,u}},\Psi'_{\varrho_{n}})} \mu(du) dr. 
\end{multline}
Moreover,  $ \mathbbm{1}_{[0,\sigma_{n}]}(r) R(r,\omega,u)=i'_{p}  \mathbbm{1}_{[0,\sigma_{n}]}(r) \tilde{R}_{n}(r,\omega,u)$ for $\mbox{Leb} \otimes \Prob \otimes \mu$-a.e. $(r,\omega,u)$ and 
\begin{multline}\label{eqLocalDiffusionIntegral}
 \int_{0}^{t \wedge \sigma_{n} } \int_{U} R(r,\omega,u) M(dr,du) \\
 = \int_{0}^{t} \int_{U}  \mathbbm{1}_{[0,\sigma_{n}]}(r) \tilde{R}_{n}(r,\omega,u) M(dr,du), \quad \forall t \in [0,T_{n}]. 
\end{multline}
\end{enumerate}
Finally, since $\xi$ is (weakly) integrable, by  a modification in the arguments of Lemma 6.11 in \cite{FonsecaMora:SPDECMVM} there exists a continuous Hilbertian seminorm $\rho$ on $\Psi$ such that $\xi$ has a $\Psi'_{\rho}$-valued version $\tilde{\xi}$ with $\Exp \left(\rho'(\tilde{\xi})\right)<\infty$. One can assume without loss of generality that $\rho=\varrho_{1}$. This way, if for every $n \in \N$ we define $\tilde{\xi}_{n}=i'_{\varrho_{1},\varrho_{n}} \tilde{\xi} $, then we have $\tilde{\xi}_{n}$ is a $\Psi'_{\varrho_{n}}$-valued $\mathcal{F}_{0}$-measurable (strongly) integrable random variable.   Then, by \eqref{eqLocalDriftIntegral} and \eqref{eqLocalDiffusionIntegral},  for every $n \in \N$ we have 
$$  \tilde{X}^{n}_{t} \defeq  X_{t \wedge \sigma_{n}} = \tilde{\xi}_{n} +\int_{0}^{t} \, \mathbbm{1}_{[0,\sigma_{n}]}(r) \tilde{B}_{n}(r) dr +  \int_{0}^{t} \int_{U}  \mathbbm{1}_{[0,\sigma_{n}]}(r) \tilde{R}_{n}(r,\omega,u) M(dr,du), $$
is a $\Psi'_{\varrho_{n}}$-valued c\`{a}dl\`{a}g semimartingale. 
\end{proof}

\begin{remark}\label{remarkQuadraticVariation}
Since $\tilde{X}^{n}$ is a $\Psi'_{\varrho_{n}}$-valued c\`adl\`ag semimartingale, the (optional) quadratic  variation $[\tilde{X}^{n}]_{t}$ of $\tilde{X}^{n}$ exists. We can therefore define the (optional) \emph{quadratic variation} $[X]$ of $X$ by localization as $[X]_{t \wedge \sigma_{n}}=[\tilde{X}^{n}]_{t}$ for $t \in [0,\sigma_{n}]$. 
\end{remark}

\section{It\^{o}'s formula for cylindrical martingale-valued measures}\label{sectItoFormulaCMVM}

As part of our formulation of the It\^{o}'s formula, we will need the following notion of trace with respect to the Hilbert spaces determined by the seminorms corresponding to the continuous part $M^{c}$ of $M$. 

\begin{definition}\label{defiTrace}
Let $A: \R_{+} \times \Omega \times U \rightarrow \mbox{Bil}(\Psi', \Psi'; H)$, where $H$ is a separable Hilbert space. Given $(s,\omega, u) \in \R_{+} \times \Omega \times U$, let $(e_{j}(s,u): j \in \N)$ be an orthonormal basis in $\Phi'_{q_{r,u}^{c}}$.  Define 
$$ \mbox{Tr}_{R(s,\omega, u)}(A(s,\omega,u))= \sum_{j=1}^{\infty} A(s,\omega,u) \left( R(s,\omega, u)  e_{j}(s,u), R(s,\omega, u)  e_{j}(s,u) \right), $$
provided the series is convergent in $H$. 
\end{definition}

\begin{remark}
By similar arguments to those used in the proof of Proposition 3.2 in \cite{Jakubowski:1995}, we can show that, by the properties of $R$, for all $(s,\omega, u) \in \R_{+} \times \Omega \times U$ the series in the definition of $\mbox{Tr}_{R(s,\omega, u)}(A(s,\omega,u))$ is convergent and that it does not depend on the choice of  orthonormal basis in $\Phi'_{q_{r,u}^{c}}$. Moreover, if for every $f,g \in \Psi'$ the mapping $(s,\omega, u) \mapsto A(s,\omega, u)(f,g)$ is $\mathcal{P} \otimes \mathcal{B}(U)/\mathcal{B}(H)$ measurable, then $(s,\omega, u) \mapsto  \mbox{Tr}_{R(s,\omega, u)}(A(s,\omega,u))$ is so. 
\end{remark}

The following is our main result.

\begin{theorem}[It\^{o}'s formula]\label{theoItoFormula}
Let $X$ be as in Definition \ref{defiItoProcess} and assume $\Gamma$ is a separable complete bornological nuclear space. Let $F:[0, \infty) \times \Psi' \rightarrow \Gamma'$ be a function of class $C^{1,2}$. We assume $F_{t}$ and $F_{xx}$ are uniformly continuous on bounded subsets of $[0,\infty)\times \Psi'$. Then, almost surely we have for every $t\geq 0$
$$
\begin{aligned}
& F(t, X_t) = F(0,\xi)+\int_{0}^{t} F_{t}(s, X_{s-}) d s + \int_{0}^{t}\!\!\int_U \, F_{x}(s, X_{s-})R(s, u) \, M(ds, du) \\
& + \int_{0}^{t} \, F_{x}(s, X_{s-}) B(s) \, ds  +\frac{1}{2} \int_{0}^{t} \!\!\int_U \textup{Tr}_{R(s, u)} \left( F_{x x}(s, X_{s-}) \right) \mu(du) ds \\
& +\sum_{0<s \leq t}\left[F\left(s,X_s\right)-F\left(s,X_{s-}\right)-F_x\left(s,X_{s-}\right)\left(\Delta X_{s} \right)\right].
\end{aligned}
$$
\end{theorem}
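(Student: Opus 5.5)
The plan is to reduce the statement to a Hilbert-space Itô formula by localization, following the program described in the introduction.

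\textbf{Step 1: localize in $\Psi'$.} By Theorem \ref{theoLocalHilbertSemima}, for each $n$ the stopped process $\tilde{X}^{n}=X^{\sigma_n}$ is a $\Psi'_{\varrho_n}$-valued càdlàg semimartingale of the form
$\tilde{\xi}_n+\int_0^t \mathbbm{1}_{[0,\sigma_n]}\tilde{B}_n\,dr+\int_0^t\!\int_U \mathbbm{1}_{[0,\sigma_n]}\tilde{R}_n\,M(dr,du)$,
with $\tilde{R}_n(r,u)\in\mathcal{L}_2(\Phi'_{q_{r,u}},\Psi'_{\varrho_n})$. It suffices to prove the formula on $[0,\sigma_n\wedge T_n]$ for each $n$ and then let $n\to\infty$, since $\sigma_n\nearrow\infty$.

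We would further stop at $\tau_{n,k}=\inf\{t:\varrho_n'(\tilde{X}^n_t)>k\}$. This makes $\tilde{X}^n_{s-}$ live in the ball of radius $k$ of $\Psi'_{\varrho_n}$. The image of that ball under $i'_{\varrho_n}$ is bounded in $\Psi'$, so the uniform continuity hypotheses on $F_t$ and $F_{xx}$ apply there. The stopping times $\tau_{n,k}$ tend to infinity as $k\to\infty$ because paths are càdlàg.

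\textbf{Step 2: pass to Hilbert spaces.} Define $G_n(t,h)=F(t,i'_{\varrho_n}h)$ for $h\in\Psi'_{\varrho_n}$. By the inductive-limit characterization of Fréchet differentiability (result (1.6.1) of Yamamuro), $G_n$ is $C^{1,2}$ with
$\partial_h G_n=F_x\circ i'_{\varrho_n}$ and $\partial_{hh}G_n=F_{xx}(i'_{\varrho_n}\cdot,i'_{\varrho_n}\cdot)$.

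The codomain $\Gamma'$ still has to be made a Hilbert space. Since $\Gamma$ is separable, complete, bornological and nuclear, every bounded set of $\Gamma'$ lies in, and is bounded in, some $\Gamma'_{p}$ with $p$ a Hilbertian seminorm (nuclear duals are regular inductive limits). The plan is as follows.
\begin{enumerate}
\item Apply the scalar or Hilbert Itô formula to $\inner{G_n(t,\tilde{X}^n_t)}{\gamma}$ for each fixed $\gamma\in\Gamma$.
\item Use the Itô formula for cylindrical martingale-valued measures in Hilbert spaces of \cite{CambroneroEtAl:ItoFormula} with the real-valued function $h\mapsto\inner{G_n(t,h)}{\gamma}$.
\item Recover the $\Gamma'$-valued identity at the end from the weak–strong compatibility \eqref{eqWeakStrongCompLevyMartValMeas} and \eqref{eqWeakStrongCompaLebesIntegral}, together with the regularization theorem that upgrades "for each $\gamma$, a.s. for all $t$" to "a.s. for all $t$" for càdlàg $\Gamma'$-valued processes, since $\Gamma$ is separable.
\end{enumerate}

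\textbf{Step 3: identify each term.} Applying the Hilbert-space formula, we would do the following.
\begin{enumerate}
\item Split the stochastic integral via Proposition \ref{propStochIntegralSumContAndDiscParts} into the $M^c$ and $M^d$ parts. Only the continuous part contributes a second-order correction.
\item Compute that correction, using nuclearity with seminorms $q^c_{r,u}$, as $\frac12\int\!\int \sum_j \partial_{hh}G_n(\tilde{R}_n e_j,\tilde{R}_n e_j)\,\mu(du)ds$. Here $(e_j)$ is an orthonormal basis of $\Phi'_{q^c_{r,u}}$.
\item Check that this equals $\mathrm{Tr}_{R(s,u)}(F_{xx}(s,X_{s-}))$, using $R=i'_{\varrho_n}\tilde{R}_n$ a.e. Convergence and basis-independence hold because $\tilde{R}_n j_c$ is Hilbert–Schmidt.
\item Leave the jumps of $M^d$ as the compensated sum $\sum[F(X_s)-F(X_{s-})-F_x(X_{s-})\Delta X_s]$. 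This sum converges absolutely on the localized interval by Taylor's formula together with $\sum\varrho_n'(\Delta \tilde{X}^n_s)^2\le[\tilde{X}^n]<\infty$ (Remark \ref{remarkQuadraticVariation}).
\item Identify the stochastic integral term $\int F_x(s,X_{s-})R\,dM$ by checking that $F_x(s,X_{s-})R(s,u)$ is an admissible integrand. It is predictable (left limits) and locally bounded in operator norm after the $\tau_{n,k}$ localization. Its weak version $R'F_x'\gamma$ then agrees with the Hilbert-space integrand.
\end{enumerate}

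\textbf{Main obstacle.} I expect the hardest step to be the passage between the Hilbert formula and the nuclear one. This has two parts. The first is ensuring that $\partial_{hh}G_n$ is uniformly continuous on the bounded sets required by the cited Hilbert-space Itô formula, which the uniform-continuity hypotheses on bounded subsets of $\Psi'$ are designed to give. The second is verifying that the trace and stochastic integral built on $\Psi'_{\varrho_n}$ coincide with the $\Psi'$-intrinsic objects independently of $n$, so that the formulas for different $n$ are consistent and patch together as $n,k\to\infty$.
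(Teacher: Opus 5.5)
Your skeleton matches the paper's. You reduce to $\Gamma=\R$ by pairing with $\gamma$ and using separability of $\Gamma$, localize with Theorem~\ref{theoLocalHilbertSemima} and a stopping time so that $X_{s-}$ stays in a bounded subset of $\Psi'_{\varrho_n}$, invoke the Hilbert-space formula of \cite{CambroneroEtAl:ItoFormula}, and control the jump sum by Taylor's formula and $[X]$. Localizing purely by stopping, instead of the paper's truncation of $\xi,B,R$ plus term-by-term convergence, is a reasonable variant.

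There is, however, a genuine gap at the point you call the main obstacle, and it lies on the $\Phi$ side rather than the $\Psi'$ side. You Hilbertize only the state space. Your integrands $\tilde R_n(r,u)$ act on the varying spaces $\Phi'_{q_{r,u}}$, whereas the cited theorem requires $M$ to be a cylindrical martingale-valued measure on one fixed Hilbert space. There the $(r,u)$-dependence is encoded in its quadratic variation $\quadraVari{M}$ and covariance operator $Q_M$. The second-order term that formula produces is $\frac12\int\!\int \widetilde{\textup{Tr}}_{R(s,u)Q'_{M^c}(s,u)^{1/2}}(F_{xx})\,\quadraVari{M}(ds,du)$. It is not $\frac12\int\!\int\sum_j\partial_{hh}G_n(\tilde R_n e_j,\tilde R_n e_j)\,\mu(du)ds$, which is just Definition~\ref{defiTrace} restated. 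So, as written, neither the applicability of the Hilbert formula to $\tilde X^n$ nor the identification of its trace term is justified.

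The missing idea is the paper's Step 3, which proceeds as follows.
\begin{enumerate}
\item Choose a single continuous Hilbertian seminorm $\rho$ on $\Phi$ with $q_{r,u}\le\rho$ on $[0,T]\times U$ and with every inclusion $i_{q^c_{r,u},\rho}$ Hilbert--Schmidt. The paper gets this from the boundedness in Definition~\ref{defiCMVM}(6) together with barrelledness and nuclearity of $\Phi$.
\item Then $M$ is an orthogonal cylindrical martingale-valued measure on $\Phi_\rho$, with $\quadraVari{M}(ds,du)=\|i_{q_{s,u},\rho}\|^2\mu(du)ds$ and $\rho(Q_M\phi,\varphi)=q_{s,u}(\phi,\varphi)/\|i_{q_{s,u},\rho}\|^2$.
\item From these formulas one shows that $R$ lies in the Hilbert-space integrand class and that the two stochastic integrals coincide: first for simple integrands, then via the isometries and weak--strong compatibility. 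This is needed before the Hilbert formula can be applied to $\tilde X^n$ at all, not only for the $F_xR$ term.
\item Finally, $\tilde Q=\|i_{q_{s,u},\rho}\|^2Q'_{M^c}$ is nuclear, non-negative and symmetric, and its square root satisfies $\tilde Q^{1/2}e_n=\lambda_ne_n$ with $(\lambda_ne_n)$ an orthonormal basis of $\Phi'_{q^c_{s,u}}$. This converts $\|i_{q_{s,u},\rho}\|^2\,\widetilde{\textup{Tr}}_{R(s,u)Q'_{M^c}(s,u)^{1/2}}(F_{xx})$ into $\textup{Tr}_{R(s,u)}(F_{xx})$, and hence the $\quadraVari{M}$-integral into the $\mu(du)ds$-integral in the statement.
\end{enumerate}
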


\begin{remark}\label{remaDefiTraceIntegral}
The integral $\int_{0}^{t} \!\!\int_U \textup{Tr}_{R(s, u)} \left( F_{x x}(s, X_{s-}) \right) \mu(du) dr$ is a defined as a $\Gamma'$-valued regular continuous  process satisfying $\Prob$-a.e.
\begin{multline*}
\inner{\int_{0}^{t} \!\!\int_U \textup{Tr}_{R(s, u)} \left( F_{x x}(s, X_{s-}) \right) \mu(du) ds}{\gamma} \\
= \int_{0}^{t} \!\!\int_U \textup{Tr}_{R(s, u)} \left( \inner{F_{x x}(s, X_{s-})}{\gamma} \right) \mu(du) ds, \quad \forall \gamma \in \Gamma.     
\end{multline*}
As part of the proof of Theorem \ref{theoItoFormula}, we will show  that the integral in the right-hand side is well-defined for every $\gamma \in \Gamma$ (observe that since the bilinear form  $\inner{F_{x x}(s, X_{s-})}{\gamma}$ is real-valued, the trace is as in Definition \ref{defiTrace}). Then the existence of the  $\Gamma'$-valued regular continuous process in the left-hand side of the above equality is obtained through a regularization theorem argument (see \cite{FonsecaMora:SPDELevy}, Section 3.2).   
\end{remark}

\begin{remark}\label{remaFiniteRandomSum} It is a consequence of Theorem \ref{theoLocalHilbertSemima} that for every $n \in \N$ and $\omega \in \Omega$, the mapping $s \mapsto X_{s}(\omega)$ is c\`{a}dl\`{a}g from $[0,\sigma_{n}(\omega)]$ into the Hilbert space $\Phi'_{\varrho_{n}}$. Because $\Phi'_{\varrho_{n}}$ is a complete separable metric space, then $\Delta X_{s}(\omega) \neq 0$ for a finite number of $s \in [0,\sigma_{n}(\omega)]$. Thus by our assumptions of continuity on $F$ and $F_{x}$, we conclude that 
$$ \sum_{0<s \leq t}\left[F\left(s,X_s\right)-F\left(s,X_{s-}\right)-F_x\left(s,X_{s-}\right)\left(\Delta X_{s} \right)\right],$$ 
is a finite random sum, hence is well-defined. 
\end{remark}

For the proof of Theorem \ref{theoItoFormula} (specially in Step 3 below), the reader will be assumed to be familiar with the theory of quadratic variation and of stochastic integration with respect to cylindrical martingale-valued measures in Hilbert spaces, see \cite{CambroneroEtAl:ItoFormula} for further details.

\begin{proof}[Proof of Theorem \ref{theoItoFormula}]
We first reduce the proof of the formula to the scalar case. Assume that  It\^{o}'s formula holds for $G:[0, \infty) \times \Psi' \rightarrow \R$. 

For $\gamma \in \Gamma$ consider the function $G:[0, \infty) \times \Psi' \rightarrow \R$ given by $G(t,x)=\inner{F(t,x)}{\gamma}$. Then $G$ is of class  $C^{1,2}$, $G_{t}$ and $G_{xx}$ are uniformly continuous on bounded subsets of $[0,\infty)\times \Psi'$. Applying It\^{o}'s formula to $G$, \eqref{eqWeakStrongCompLevyMartValMeas}, \eqref{eqWeakStrongCompaLebesIntegral}, and by Remarks \ref{remaDefiTraceIntegral} and  \ref{remaFiniteRandomSum}, we have $\Prob$-a.e. 
\begin{eqnarray*}
\inner{F(t, X_t)}{\gamma} 
& = &  \left\langle F(0,\xi)+\int_{0}^{t} F_{t}(s, X_{s-}) d s + \int_{0}^{t}\!\!\int_U \, F_{x}(s, X_{s-})R(s, u) \, M(ds, du) \right.\\
& {} & + \int_{0}^{t} \, F_{x}(s, X_{s-}) B(s) \, ds  +\frac{1}{2} \int_{0}^{t} \!\!\int_U \textup{Tr}_{R(s, u)} \left( F_{x x}(s, X_{s-}) \right) \mu(du) dr \\ 
& {} &  \left. +\sum_{0<s \leq t}\left[F\left(s,X_s\right)-F\left(s,X_{s-}\right)-F_x\left(s,X_{s-}\right)\left(\Delta X_{s} \right)\right] \, , \gamma \right\rangle 
\end{eqnarray*}
Since $\Gamma$ is separable, there exists a countable dense subset $D$ of $\Gamma$. If $M$ is the $\Q$-span of $D$, then $\Prob$-a.e. we have the above identity holds true $\forall \, \gamma \in M$. Then, by an application of the Hahn-Banach theorem we obtain the equality $\Prob$-a.e. in It\^{o}'s formula for the vector case. 

Our main objective then is to show that It\^{o}'s formula holds true for $F:[0, \infty) \times \Psi' \rightarrow \R$. For the reader's convenience we divide the  proof in three main steps. 

\textbf{Step 1.} We will show that $F$, $F_{t}$, $F_{x}$ and $F_{xx}$  are uniformly continuous and bounded on bounded subsets of $[0,\infty) \times \Psi'$.
 
Let $B$ a bounded subset of $[0,\infty) \times \Psi'$. Since $\Psi$ is barrelled, there exists $T>0$, a continuous Hilbertian seminorm $\rho$ on $\Psi$ and $R>0$ such that (see Theorem IV.5.2 in \cite{Schaefer})
$$ B \subseteq \tilde{B} \defeq [0,T] \times B_{\rho'} (R). $$ 
Let $F_{\rho}=\restr{F}{\Psi'_{\rho}}$. We have by the properties of the Fr\'echet derivative that
$$ (F_{\rho})_{x}= \restr{F_{x}}{\Psi'_{\rho}}, \quad (F_{\rho})_{xx} = \restr{F_{xx}}{\Psi'_{\rho}} . $$ 

Since $\tilde{B}$ is bounded on $\Psi'$, by our hypothesis we have $F_{xx}$ is uniformly continuous on $\tilde{B}$, hence bounded on $\tilde{B}$. Since $
\tilde{B} \subseteq [0,\infty) \times \Psi'_{\rho}$, we have $(F_{\rho})_{xx}$ is bounded on $\tilde{B}$. Recall that since $\Psi'_{\rho}$ is Hilbert, then $(F_{\rho})_{xx}$ coincides with the usual definition of Fr\'echet derivative for Hilbert spaces. Let 
$$ K= \sup \left\{ \norm{(F_{\rho})_{xx}(s,x)}_{\mbox{Bil}(\Psi'_{\rho}, \Psi'_{\rho})}: (s,x) \in \tilde{B} \right\}. $$
By the mean-value theorem, on $\tilde{B}$ we have 
$$ \norm{ (F_{\rho})_{x}(s,x)- (F_{\rho})_{x}(t,y) }_{\mathcal{L}(\Psi'_{\rho},\R)} \leq K \left( \abs{t-s}+\rho'(x-y) \right). $$
Thus $(F_{\rho})_{x}$ is uniformly continuous on $B$, hence bounded on $B$. Therefore $F_{x}$ is uniformly continuous and bounded on $B$. An analogous application of the mean-value theorem shows that $F$ is uniformly continuous and bounded on $B$. This proves Step 1. 

\textbf{Step 2.} We will show that all the processes in  It\^{o}'s formula are well-defined. Moreover, we will prove that It\^{o}'s formula holds true as long as it is true under the following assumption:

\textbf{Assumption A.} \emph{There exists a continuous Hilbertian seminorm $\eta$ on $\Psi$ such that $\xi(\omega) \in \Psi'_{\eta}$, $B(r,\omega) \in \Psi'_{\eta}$, $R(r,\omega,u) \in \mathcal{L}_{2}(\Phi'_{q_{r,u}},\Psi'_{\eta})$ $\forall u \in U$, $t \in [0,T]$ $\Prob$-a.e., and there exists $K>0$ such that $\Prob$-a.e.}
$$ \eta'(X_{t}) \leq K, \quad \int_{0}^{T} \, \eta'(B(r)) dr \leq K, \quad \int_{0}^{T} \int_{U} \norm{R(r,u)}^{2}_{\mathcal{L}_{2}(\Phi'_{q_{r,u}},\Psi'_{\eta})} \mu(du) dr \leq K. $$

To prove Step 2, for $n \in \N$, $k>0$, let 
\begin{multline*}
\tau_{n,k} = T \wedge \sigma_{n} \wedge 
\inf \left\{ t: \varrho_{n}'(X_{t}) \wedge \int_{0}^{T} \, \varrho_{n}'(B(r)) dr \right. \\
\left.  \wedge \int_{0}^{T} \int_{U} \norm{R(r,u)}^{2}_{\mathcal{L}_{2}(\Phi'_{q_{r,u}},\Psi'_{\varrho_{n}})} \mu(du) dr  \geq k \right\}. 
\end{multline*}
where we make the convention $\inf \emptyset = \infty$. Now, by the definition of $\sigma_{n}$ and $\varrho_{n}$, we have
\begin{multline*}
\Prob (T \wedge \sigma_{n} \wedge \tau_{n,k}) 
\leq \frac{3}{k} \left[ \Exp \left( \varrho'_{n} (X_{\sigma_{n} \wedge T} )+  \int_{0}^{\sigma_{n} \wedge T} \, \varrho_{n}'(B(r)) dr \right. \right. \\
\left. \left. +
\int_{0}^{\sigma_{n} \wedge T} \int_{U} \norm{R(r,u)}^{2}_{\mathcal{L}_{2}(\Phi'_{q_{r,u}},\Psi'_{\varrho_{n}})} \mu(du) dr \right)  \right] 
\end{multline*}
Choosing $k=m_{n}$ large enough we conclude that 
\begin{equation*}
\Prob (T \wedge \sigma_{n} \wedge \tau_{n,k})  < 2^{-n}. 
\end{equation*}
Then, by an standard application of the Borel-Cantelli lemma we have $\Prob$-a.e. $\tau_{n,k} =T \wedge \sigma_{n}$ for sufficiently large $n$. 

For every $n \in \N$, let $\tau_{n}\defeq \tau_{n,m_{n}} \wedge \tau_{n+1,m_{n+1}} \wedge \dots $. Then $\tau_{n} \nearrow T$ and $\tau_{n}=T$ for sufficiently large $n$ $\Prob$-a.e. By construction, on $[0,\tau_{n}]$ the process $X$ is bounded in $\Psi'_{\varrho_{n}}$, thus $X$ is bounded in $\Psi'$. 

Then, our Step 1 shows that on the interval $[0,\tau_{n}]$ there exists $C>0$ such that $\abs{F(t,X_{t-})}$, $\abs{F_{t}(t,X_{t-})}$,   $\norm{ F_{x}(t,X_{t-})}_{\mathcal{L}(\Psi'_{\varrho_{n}},\R)}$, $\norm{F_{xx}(t,X_{t-})}_{\mbox{Bil}(\Psi'_{\varrho_{n}}, \Psi'_{\varrho_{n}})}$ are all bounded by $C$. Then, 
$ \int_{0}^{t} \mathbbm{1}_{[0,\tau_{n}]} F_{t}(s,X_{s-}) ds $ is a well-defined Lebesgue integral $\omega$-wise $\Prob$-a.e. 

Likewise, since on  $[0,\tau_{n}]$ we have
\begin{equation}\label{eqNormBoundedByC}
\begin{aligned}
\abs{F_{x}(t,X_{t-}(\omega))B(t,\omega)} \leq C  \varrho'_{n}(B(t,\omega)).\\
 \norm{F_{x}(t,X_{t-}(\omega))R(t,\omega, u)}_{\mathcal{L}(\Psi'_{q_{r,u}},\R)} \leq C \norm{R(r,u)}_{\mathcal{L}_{2}(\Phi'_{q_{r,u}},\Psi'_{\varrho_{n}})}. \\
\abs{ \mbox{Tr}_{R(t,\omega,u)}(F_{xx}(t,X_{t-}))} 
\leq C \norm{R(r,u)}_{\mathcal{L}_{2}(\Phi'_{q^{c}_{r,u}},\Psi'_{\varrho_{n}})}. 
\end{aligned}
\end{equation}
Therefore, the stochastic integral $ \int_{0}^{t} \int_U \,\mathbbm{1}_{[0,\tau_{n}]} F_{x}(s, X_{s-})R(s, u) \, M(ds, du) $ is a well-defined real-valued square-integrable martingale, and $\Prob$-a.e. the  path-wise Lebesgue integrals $\int_{0}^{t} \,\mathbbm{1}_{[0,\tau_{n}]}  F_{x}(s, X_{s-}) B(s) \, ds $ and $
 \int_{0}^{t} \!\!\int_U  \mathbbm{1}_{[0,\tau_{n}]} \mbox{Tr}_{R(s, u)} \left( F_{x x}(s, X_{s-}) \right) \mu(du) ds$ are well-defined.
 
 From the localization properties of the Lebesgue and stochastic integrals, we conclude that on $[0,T]$ the integrals in  It\^{o}'s formula are all well-defined and do not depend on the choice of $\tau_{n}$, $\sigma_{n}$ and $\varrho_{n}$. 
 
On the other hand, below as part of our proof of the convergence in \eqref{eqConverSumsItoFormula} we will show that 
$$ \sum_{0<s \leq t}\left[F\left(s,X_s\right)-F\left(s,X_{s-}\right)-F_x\left(s,X_{s-}\right)\left(\Delta X_{s} \right)\right] <\infty,$$
for every $t \in [0,T]$ $\Prob$-a.e. 
 
Now, put 
 $$ X^{n} = \xi^{n} +\int_{0}^{t} B^{n}(r)dr+ \int_{0}^{t} \int_{U} R^{n}(r,u) M(dr,du),$$
 where $\xi^{n}(\omega)=\xi(\omega) \mathbbm{1}_{\{\varrho'_{1}(\xi)\leq n\}}(\omega)$, and 
 $$B^{n}(r,\omega)=\mathbbm{1}_{[0,\tau_{n}(\omega)]}(r)B(r,\omega), \quad R^{n}(r,\omega, u)=\mathbbm{1}_{[0,\tau_{n}(\omega)]}(r)R(r,\omega,u).$$ 
 In the above, recall from the proof of Theorem \ref{theoLocalHilbertSemima} that $\xi$ has a $\Psi'_{\varrho_{1}}$-valued version $\tilde{\xi}$ satisfying $\Exp \left(\varrho'_{1}(\tilde{\xi})\right)<\infty$.

If It\^{o}'s formula holds for $X^{n}$, this as in the statement of Step 2, then we have 
$$
\begin{aligned}
& F(t, X^{n}_t) = F(0,\xi^{n})+\int_{0}^{t} F_{t}(s, X^{n}_{s-}) d s + \int_{0}^{t}\!\!\int_U \, F_{x}(s, X^{n}_{s-})R^{n}(s, u) \, M(ds, du) \\
& + \int_{0}^{t} \, F_{x}(s, X^{n}_{s-}) B^{n}(s) \, ds  +\frac{1}{2} \int_{0}^{t} \!\!\int_U \textup{Tr}_{R^{n}(s, u)} \left( F_{x x}(s, X^{n}_{s-}) \right) \mu(du) ds \\
& +\sum_{0<s \leq t}\left[F\left(s,X^{n}_s\right)-F\left(s,X^{n}_{s-}\right)-F_x\left(s,X^{n}_{s-}\right)\left(\Delta X^{n}_{s} \right)\right].
\end{aligned}
$$
We must show that both sides in the above equality converges (at least at subsequence) almost surely to the corresponding sides in the It\^{o}'s formula. 

First, observe that by the definition of $\tau_{n}$,  \eqref{eqLocalDriftIntegral}, \eqref{eqLocalDiffusionIntegral}, and because for every $n \in \N$,  $i'_{\varrho_{n}} \in \mathcal{L}(\Psi'_{_{\varrho_{n}}}, \Psi')$, we have that $\Prob$-a.e. $X^{n}_{t} \rightarrow X_{t}$ in $\Psi'$. The continuity of $F$ shows $F(t,X^{n}_{t}) \rightarrow F(t,X_{t})$ $\Prob$-a.e. Also, it is clear that $F(0,\xi^{n}) \rightarrow F(0,\xi)$ $\Prob$-a.e

 Now, since $\mathbbm{1}_{]\tau_{n},T]}(s) \abs{F_{t}(s,X_{s-})} \leq C$, and because
$$ \int_{0}^{t} \abs{F_{t}(s,X^{n}_{s-}) -F_{t}(s,X_{s-})} ds  = \int_{0}^{t} \mathbbm{1}_{]\tau_{n},T]} \abs{F_{t}(s,X_{s-})} ds  $$
by monotone convergence we have $\int_{0}^{t} F_{t}(s, X^{n}_{s-}) d s \rightarrow \int_{0}^{t} F_{t}(s, X_{s-}) d s$ $\Prob$-a.e. Likewise, by the bounds \eqref{eqNormBoundedByC} we can show $\Prob$-a.e.
$$ \int_{0}^{t} \, F_{x}(s, X^{n}_{s-}) B^{n}(s) \, ds   \rightarrow \int_{0}^{t} \, F_{x}(s, X_{s-}) B(s) \, ds. $$
$$ \int_{0}^{t} \!\!\int_U \textup{Tr}_{R^{n}(s, u)} \left( F_{x x}(s, X^{n}_{s-}) \right) \mu(du) ds \rightarrow
\int_{0}^{t} \!\!\int_U \textup{Tr}_{R(s, u)} \left( F_{x x}(s, X_{s-}) \right)  \mu(du)ds. $$
Furthermore, by \eqref{eqNormBoundedByC}  
\begin{flalign*}
& \Exp \int_{0}^{T} \int_{U}   \norm{F_{x}(t,X^{n}_{t-})R^{n}(t, u)-F_{x}(t,X_{t-})R(t, u)}_{\mathcal{L}(\Psi'_{q_{r,u}},\R)}^{2}  \mu(du) dr  \\
& \leq C^{2} \Exp  \int_{0}^{T} \int_{U} \mathbbm{1}_{]\tau_{n},T]}  \norm{R(r,u)}^{2}_{\mathcal{L}_{2}(\Phi'_{q_{r,u}},\Psi'_{\varrho_{n}})} \mu(du) dr. 
\end{flalign*}
Thus, by dominated convergence  
$$  \Exp \int_{0}^{T} \int_{U}   \norm{F_{x}(t,X^{n}_{t-})R^{n}(t, u)-F_{x}(t,X_{t-})R(t, u)}_{\mathcal{L}(\Psi'_{q_{r,u}},\R)}^{2}  \mu(du) dr  \rightarrow 0. $$
Hence by the continuity of the (weak) stochastic integral (Theorem 4.7 in \cite{FonsecaMora:SPDECMVM}) we have
$$ \int_{0}^{t} \int_U \, F_{x}(s, X^{n}_{s-})R^{n}(s, u) \, M(ds, du) 
\rightarrow \int_{0}^{t} \int_U \, F_{x}(s, X_{s-})R(s, u) \, M(ds, du), $$
in the space $\mathcal{M}^{2}_{T}$ of real-valued square integrable martingales on the time interval $[0,T]$. Therefore, one can extract a subsequence such that the convergence is almost surely for all $t \in [0,T]$.  

Finally, we must show
\begin{multline}\label{eqConverSumsItoFormula}
 \sum_{0<s \leq t}\left[F\left(s,X^{n}_s\right)-F\left(s,X^{n}_{s-}\right)-F_x\left(s,X^{n}_{s-}\right)\left(\Delta X^{n}_{s} \right)\right] \\
 \rightarrow \sum_{0<s \leq t}\left[F\left(s,X_s\right)-F\left(s,X_{s-}\right)-F_x\left(s,X_{s-}\right)\left(\Delta X_{s} \right)\right].
\end{multline}
In effect, by Taylor's theorem  (see \cite{Yamamuro:1974}, p.28) for every $s$ there exists $\theta \in (0,1)$ such that 
$$\abs{F\left(s,X^{n}_s\right)-F\left(s,X^{n}_{s-}\right)-F_x\left(s,X^{n}_{s-}\right)\left(\Delta X^{n}_{s} \right)}
\leq \frac{1}{2} \abs{F_{xx}\left(s,X^{n}_{s}+\theta \Delta X^{n}_{s} \right)( X^{n}_{s}, X^{n}_{s}) }.$$
Since $ \mathbbm{1}_{[0,\tau_{n}]} \norm{F_{xx}(t,X_{t-})}_{\mbox{Bil}(\Psi'_{\varrho_{n}}, \Psi'_{\varrho_{n}})} \leq C$, then we have (see Remark \ref{remarkQuadraticVariation})
\begin{multline*}
  \sum_{0<s \leq t} \abs{ F\left(s,X^{n}_s\right)-F\left(s,X^{n}_{s-}\right)-F_x\left(s,X^{n}_{s-}\right)\left(\Delta X^{n}_{s} \right)} \\ \leq \frac{C}{2} \sum_{0<s \leq t}  \varrho'_{n}\left(\Delta X^{n}_{s} \right)^{2}  \leq \frac{C}{2} [X]_{t} < \infty.   
\end{multline*}
By the Wierstrass $M$-test we obtain the convergence in \eqref{eqConverSumsItoFormula} $\Prob$-a.e. This proves that It\^{o}'s formula holds true under the assumptions of Step 2. 

\textbf{Step 3. }  We prove It\^{o}'s formula under Assumption A. 
It suffices to show that under  the conditions in Assumption A we are in a special case of the It\^{o}'s formula for cylindrical martingale-valued measures in a Hilbert space proved in \cite{CambroneroEtAl:ItoFormula}.

Let $\rho$ be a continuous Hilbertian seminorm on $\Psi$ such that $q_{r,u} \leq \rho$, $\forall r \in [0,T]$, $u \in U$, and 
$\int_{0}^{T} \int_{U} q_{r,u}(\cdot)^{2} \mu(du)dr \leq \varrho(\cdot)^{2}$. The existence of such a $\rho$ follows from the properties of the seminorms $q_{r,u}$ (see Definition \ref{defiCMVM}) and because $\Phi$ is nuclear and barrelled (see the proof of Theorem 4.2 in \cite{BojdeckiJakubowski:1989}). The above shows that the mapping $M(t,A):\Phi_{\rho} \rightarrow L^{2} \ProbSpace$ is continuous. Hence, one can check that $M$ defines an orthogonal cylindrical martingale-valued measure on the Hilbert space $\Phi_{\rho}$ (see Definition 4.1 in \cite{CambroneroEtAl:SPDE}). Indeed, as explained in Example 5.23 in \cite{CambroneroEtAl:SPDE}, the family of intensity measures associated to $M$ is of the form:
$$ \nu_{\phi}(\omega)(C)=\int_{C} q_{r,u}(\phi)^{2} \mu(du)dr, \quad \, \forall \, \phi \in \Phi_{\rho}, $$
 and its quadratic variation is 
$$ \quadraVari{M}(C)=\sup_{ n \geq 1} \nu_{\phi_{n}}(\omega)(C)=  \int_{C} \sup_{n \geq 1} q_{r,u}(\phi_{n})^{2} \mu(du)dr, $$
$\forall \, C \in \mathcal{B}(\R_{+}) \otimes \mathcal{B}(U)$ and for $(\phi_{n})_{n \geq 1}$  dense in the unit sphere in $\Phi_{\rho}$. Obverse that if $i_{q_{r,u},\rho}: \Phi_{\rho} \rightarrow \Phi_{q_{r,u}}$ is the canonical inclusion, then 
\begin{equation}\label{eqQuadraVariation}
\quadraVari{M}(C)= \int_{C} \norm{i_{q_{r,u},\rho}}_{\mathcal{L}(\Phi_{\rho},\Phi_{q_{r,u}})}^{2} \mu(du)dr. 
\end{equation}

Moreover, the operator-valued quadratic covariation operator $Q_{M}:[0,T] \times \Omega \times U \rightarrow \mathcal{L}(\Phi_{\varrho}, \Phi_{\varrho})$ is given by 
$$ \rho ( Q_{M}(r,\omega,u)\phi, \varphi)=\frac{q_{r,u}(\phi,\varphi)}{\norm{i_{q_{r,u},\rho}}^{2}_{\mathcal{L}(\Phi_{\rho},\Phi_{q_{r,u}})}}, \quad \forall \, \phi, \varphi \in \Phi_{\rho}. $$
Recall that $Q_{M}$ is non-negative, self-adjoint and is such that $\forall \, 0 \leq t \leq T$, $A \in \mathcal{A}$, $\phi, \varphi \in \Phi_{\rho}$, $\Prob$-a.e. 
$$ \inner{M(A)(\phi)}{M(A)(\varphi)}_{t}(\omega)=\int_{0}^{t} \int_{A} \rho ( Q_{M}(r,\omega,u)\phi, \varphi) \quadraVari{M}(\omega)(dr,du). $$
We must check that our assumptions are enough to  conclude that $X$ is an It\^{o} process in the Hilbert space sense (see Definition 4.1 in \cite{CambroneroEtAl:ItoFormula}). In particular, we must check that $R$ is stochastically integrable with respect to $M$ in the Hilbert space sense (see Section 6 in \cite{CambroneroEtAl:SPDE}).

 In effect, observe that if $(\psi^{\eta}_{n}: n \in \N) \subseteq \Phi$ is a complete orthonormal system in $\Psi_{\eta}$, then by the definition of $Q_{M}$ and $\quadraVari{M}$ we have  
 \begin{flalign*}
& \Exp \int_{0}^{T} \int_{U} \norm{R(r,u)}^{2}_{\mathcal{L}_{2}(\Phi'_{q_{r,u}},\Psi'_{\eta})} \mu(du)dr \\
& =\Exp \int_{0}^{T} \int_{U} \sum_{n=1}^{\infty} q_{r,u}(R(r,u)'\psi^{\eta}_{n})^{2} \mu(du)dr \\
& =\Exp \int_{0}^{T} \int_{U} \sum_{n=1}^{\infty} \rho(Q_{M}(r,u)^{1/2}  R(r,u)'\psi^{\eta}_{n})^{2} \quadraVari{M}(dr,du) \\
& =\Exp \int_{0}^{T} \int_{U} \norm{R(r,u)Q'_{M}(r,u)^{1/2}}^{2}_{\mathcal{L}_{2}(\Phi'_{\rho},\Psi'_{\eta})} \quadraVari{M}(dr,du) 
 \end{flalign*}
  
By Assumption A we conclude that $R \in \Lambda^{2}(M,T;\Phi'_{\rho},\Psi'_{\eta})$, i.e. $R$ is stochastically integrable with respect to $M$ in the Hilbert space sense (Definition   6.4 in \cite{CambroneroEtAl:SPDE}). 
  
\textbf{Claim:}
$\int_{0}^{t} \int_{U} R(r,u) M(dr,du)$ coincides with the stochastic integral in the Hilbert space sense. 

To prove the Claim, let $(S^{n}: n \in \N)$ be a sequence of simple processes converging to $R$ in $\Lambda^{2}(M,T;\Phi'_{\rho},\Psi'_{\eta})$. Denote by $I(S^{n})$ the stochastic integral in the Hilbert space sense for $S^{n}$ and $I(R)$ the stochastic integral for $R$. Then, $I(S^{n}) \rightarrow I(R)$ in the space $\mathcal{M}^{2}_{T}(\Psi'_{\eta})$. 

On the other hand, for every $\psi \in \Psi_{\eta}$ we have
\begin{flalign*}
& =\Exp \int_{0}^{T} \int_{U} q_{r,u}(S^{n}(r,u)'\psi - R(r,u)'\psi)^{2} \mu(du)dr \\
& =\Exp \int_{0}^{T} \int_{U} \rho(Q_{M}(r,u)^{1/2}(S^{n}(r,u)'\psi-  R(r,u)'\psi)^{2} \quadraVari{M}(dr,du)\\
& \leq \eta(\psi)^{2} \Exp \int_{0}^{T} \int_{U} \norm{(S(r,u)-R(r,u))Q'_{M}(r,u)^{1/2}}^{2}_{\mathcal{L}_{2}(\Phi'_{\rho},\Psi'_{\eta})} \quadraVari{M}(dr,du) \\
& \rightarrow 0 
\end{flalign*}
as $n \rightarrow \infty$. Therefore, by Theorem 4.7 in \cite{FonsecaMora:SPDECMVM} we have  
$$ \int_{0}^{t} \int_{U} S^{n}(r,u)' \psi M(dr,du) \rightarrow  \int_{0}^{t} \int_{U} R(r,u)' \psi M(dr,du), $$
in $\mathcal{M}^{2}_{T}(\R)$. 
Assume for the moment that we have 
$$ \int_{0}^{t} \int_{U} S^{n}(r,u)' \psi M(dr,du) = \inner{I_{t}(S^{n})}{\psi}, \quad \forall \, n \in \N. $$
Since we also have $\inner{I(S^{n})}{\psi} \rightarrow \inner{I(R)}{\psi}$ in  $\mathcal{M}^{2}_{T}(\R)$, 
by uniqueness of limits we conclude that 
$$ \inner{I_{t}(R)}{\psi} = \int_{0}^{t} \int_{U} R(r,u)' \psi M(dr,du) = \inner{\int_{0}^{t} \int_{U} R(r,u) M(dr,du)}{\psi}. $$
Hence, $I_{t}(R)$ and $\int_{0}^{t} \int_{U} R(r,u) M(dr,du)$ are indistinguishable $\Psi'_{\eta}$-valued square integrable martingales. 

It only remains to check that the integrals coincide for simple integrands. In effect, let $S(r,\omega,u)=\mathbbm{1}_{F}(\omega)\mathbbm{1}_{]s,t]}(r)\mathbbm{1}_{A}(u)B$,  for $B \in \mathcal{L}_{2}(\Phi'_{\rho}, \Psi'_{\eta})$. Then for $\psi  \in \Psi_{\eta}$, we have $\Prob$-a.e. 
$$ \int_{0}^{t} \int_{U} S(r,u)' \psi M(dr,du) = \mathbbm{1}_{F} M((s,t],A)(B'\psi)=I_{t}(\inner{S^{n})}{\psi})=\inner{I(S^{n})}{\psi}. $$
This proves the claim. 

By what we have shown, $X$ is an It\^{o} process in the Hilbert space sense, therefore we can apply It\^{o} formula to it (Theorem 4.2 in \cite{CambroneroEtAl:ItoFormula}). 
Since the Lebesgue and stochastic integrals coincide, it only remains to show that
\begin{multline}\label{eqIntegralTrace}
  \int_{0}^{t} \!\!\int_U \textup{Tr}_{R(s, u)} \left( F_{x x}(s, X_{s-}) \right) \mu(du) ds \\
  = \int_{0}^{t} \!\!\int_U \widetilde{\textup{Tr}}_{R(s, u)Q'_{M^{c}}(r,u)^{1/2}} \left( F_{x x}(s, X_{s-}) \right) \quadraVari{M}(ds,du).
\end{multline}
where for any complete orthonormal system $(g_{n}: n \in \N)$ in $\Phi'_{\rho}$ we have 
\begin{multline}\label{eqDefHilbertTrace}
\widetilde{\textup{Tr}}_{R(s, u)Q'_{M^{c}}(r,u)^{1/2}} \left( F_{x x}(s, X_{s-}(\omega)) \right) \\
= \sum_{n =1}^{\infty} F_{x x}(s, X_{s-})(R(s,\omega, u)Q'_{M^{c}}(r,u)^{1/2} g_{n}, R(s,\omega, u)Q'_{M^{c}}(r,u)^{1/2} g_{n}). 
\end{multline}
To prove \eqref{eqIntegralTrace}, observe that because $\Phi$ is nuclear we can assume without loss of generality that $\rho$ is such that the inclusion $i_{q^{c}_{r,u},\rho}: \Phi_{\rho} \rightarrow \Phi_{q^{c}_{r,u}}$ is Hilbert-Schmidt for all $(r,u) \in [0,T] \times U$.  This assumption implies that $Q'_{M^{c}}(r,u)^{1/2} \in \mathcal{L}(\Phi'_{\rho}, \Phi'_{q^{c}_{r,u}})$, which in particular shows that \eqref{eqDefHilbertTrace} is well-defined because $R(s,\omega,u) Q'_{M^{c}}(r,u)^{1/2} \in \mathcal{L}_{2}(\Phi'_{\rho}, \Phi'_{q^{c}_{r,u}})$. 

To prove that $Q'_{M^{c}}(r,u)^{1/2} \in \mathcal{L}_{2}(\Phi'_{\rho}, \Phi'_{q^{c}_{r,u}})$, denote by $\alpha_{\rho}: \Phi_{\rho} \rightarrow \Phi'_{\rho}$ and $\alpha_{q^{c}_{r,u}}: \Phi_{q^{c}_{r,u}} \rightarrow \Phi'_{q^{c}_{r,u}}$ the Riesz isometries between these Hilbert spaces and their duals. We have for $\phi, \varphi \in \Phi_{\rho}$, 
\begin{eqnarray*}
q^{c}_{r,u}(i_{q^{c}_{r,u},\rho} \, \phi, i_{q^{c}_{r,u},\rho} \, \varphi)
& = &  \alpha_{q^{c}_{r,u}} \circ i_{q^{c}_{r,u},\rho} \,  \phi \left( i_{q^{c}_{r,u},\rho} \varphi \right) \\
& = & i'_{q^{c}_{r,u},\rho} \circ \alpha_{q^{c}_{r,u}} \circ i_{q^{c}_{r,u},\rho} \, \phi (\varphi) \\
& = & \rho \left(  \alpha_{\rho}^{-1} \circ i'_{q^{c}_{r,u},\rho} \circ  \alpha_{q^{c}_{r,u}} \circ i_{q^{c}_{r,u},\rho} \phi, \varphi \right)
\end{eqnarray*}
This shows 
$$ Q_{M^{c}}(r,u) = \frac{ \alpha_{\rho}^{-1} \circ i'_{q^{c}_{r,u},\rho} \circ \alpha_{q^{c}_{r,u}} \circ i_{q^{c}_{r,u},\rho}}{\norm{i_{q_{r,u},\rho}}^{2}_{\mathcal{L}(\Phi_{\rho},\Phi_{q_{r,u}})}}. $$ 
Since the composition of Hilbert-Schmidt operators is nuclear, we have $Q_{M^{c}}(r,u)$ is a nuclear operator. The dual operator, 
$$ Q'_{M^{c}}(r,u) = \frac{ i'_{q^{c}_{r,u},\rho} \circ \alpha_{q^{c}_{r,u}} \circ i_{q^{c}_{r,u},\rho} \circ   \alpha_{\rho}^{-1} }{\norm{i_{q_{r,u},\rho}}^{2}_{\mathcal{L}(\Phi_{\rho},\Phi_{q_{r,u}})}},$$ 
is also nuclear. Consider the operator
$$ \tilde{Q}(r,u) \defeq \norm{i_{q_{r,u},\rho}}^{2}_{\mathcal{L}(\Phi_{\rho},\Phi_{q_{r,u}})} Q'_{M^{c}}(r,u) = i'_{q^{c}_{r,u},\rho} \circ \alpha_{q^{c}_{r,u}} \circ i_{q^{c}_{r,u},\rho} \circ   \alpha_{\rho}^{-1}. $$
Then $\tilde{Q}(r,u)$ is nuclear, non-negative and symmetric. Following the same ideas as in the proof of Theorem 2.5 in \cite{BojdeckiJakubowski:1990}, Step 3, p.198, one can show that $ \tilde{Q}(r,u)^{1/2}$ is onto $\Phi'_{q^{c}_{r,u}}$  and is continuous as an operator from $\Phi'_{\rho}$ into $\Phi'_{q^{c}_{r,u}}$ (this implies $Q'_{M^{c}}(r,u)^{1/2}$ has this properties as well). Moreover,  $ \tilde{Q}(r,u)^{1/2}$ takes the form 
$$ \tilde{Q}(r,u)^{1/2} f=\sum_{n=1}^{\infty} \lambda_{n}(r,u)e_{n}(r,u)\rho'(f,e_{n}(r,u)), $$
where for each $(r,u)$ we have $\lambda_{1}(r,u) \geq \lambda_{2}(r,u) \geq \cdots >0$, $(e_{n}(r,u): n \in \N)$ is an orthonormal system in $\Phi'_{\rho}$ with  $e_{n}(r,u) \in \Phi'_{q^{c}_{r,u}}$. Furthermore, $(\lambda_{n}(r,u)e_{n}(r,u):n \in \N)$ is an orthonormal basis in $\Phi'_{q^{c}_{r,u}}$.  Therefore, 
\begin{flalign*}
& \mbox{Tr}_{R(s, u)} \left( F_{x x}(s, X_{s-}) \right) \\
& = \sum_{n=1}^{\infty} F_{x x}(s, X_{s-})  (R(s,\omega, u)\lambda_{n}(r,u)e_{n}(r,u), R(s, \omega,u)\lambda_{n}(r,u)e_{n}(r,u))\\
& = \sum_{n =1}^{\infty} F_{x x}(s, X_{s-})(R(s, u)\tilde{Q}(r,u)^{1/2} e_{n}(r,u), R(s, u)\tilde{Q}(r,u)^{1/2} e_{n}(r,u))\\
& = \widetilde{\textup{Tr}}_{R(s, u)Q'_{M^{c}}(r,u)^{1/2}} \left( F_{x x}(s, X_{s-}) \right) \norm{i_{q_{r,u},\rho}}^{2}_{\mathcal{L}(\Phi_{\rho},\Phi_{q_{r,u}})} 
\end{flalign*}
The above calculation together with \eqref{eqQuadraVariation} shows that \eqref{eqIntegralTrace} holds true. This finalizes our proof of Step 3, hence of It\^{o}'s formula. 

\end{proof}

\begin{example} Following \cite{BojdeckiGorostiza:1986, BojdeckiJakubowski:1990}, a $\Phi'$-valued adapted continuous mean-zero Gaussian process $W=( W_{t} : t \geq 0)$ is called a \emph{generalized Wiener process} if 
\begin{enumerate}
\item $W_{t}-W_{s}$ is independent of $\mathcal{F}_{s}$, for $0 \leq s < t$, 
\item 
\begin{equation*} \label{covarianceFunctionalGeneralizedWiener}
\Exp \left( \inner{W_{t}}{\phi} \inner{ W_{s}}{\varphi} \right) = \int_{0}^{t \wedge s} q_{r}(\phi,\varphi)dr , \quad \forall \, t, s \in R_{+}, \, \phi \in \Phi. 
\end{equation*} 
\end{enumerate} 
where $\{q_{r}: r \in \R_{+}\}$ is a family of continuous Hilbertian semi-norms on $\Phi$, such that the map $r \mapsto q_{r}(\phi,\varphi)$ is Borel measurable and bounded on finite intervals, for each $\phi$, $\varphi$ in $\Phi$. Is is easy to verify that 
\begin{equation*} \label{generalizedWienerMartVM}
M(t,A)=W_{t} \delta_{0}(A), \quad \forall \, t \in \R_{+}, \, A \in \mathcal{B}(\{ 0 \}), 
\end{equation*}
defines a cylindrical martingale-valued measure (orthogonal, with independent increments and nuclear, here $U=\{0\}$ and $\mu=\delta_{0}$). Since $W$ has continuous paths, we have $M^{c}(t,A)=M(t,A)$ and $M^{d}(t,A)=0$. 


If $R=(R_{t}: t \leq 0)$ such that $R_{t}(\omega) \in \mathcal{L}(\Phi'_{q_{r}},\Psi')$, satisfies Assumption \ref{assumItoProcess}(3)-(4), then one can show $R$ is stochastically integrable with respect to $W$ in the sense of 
\cite{BojdeckiJakubowski:1990}, and
$$\int_{0}^{t} \int_{U} R_{r} \, M(dr,du) = \int_{0}^{t} R_{r} \,  dW_{r}.$$
If $X$ is an It\^{o} process as in Definition \ref{defiItoProcess}, then  $X$ has continuous paths.  Then, It\^{o}'s formula in Theorem \ref{theoItoFormula} takes the form
$$
\begin{aligned}
& F(t, X_t) = F(0,\xi)+\int_{0}^{t} F_{t}(s, X_{s-}) d s + \int_{0}^{t} \, F_{x}(s, X_{s-})R_{s} \, dW_{s} \\
& + \int_{0}^{t} \, F_{x}(s, X_{s-}) B(s) \, ds  +\frac{1}{2} \int_{0}^{t} \! \textup{Tr}_{R_{s}} \left( F_{x x}(s, X_{s-}) \right) ds,
\end{aligned}
$$
which coincides with the result in \cite{Jakubowski:1995}.
\end{example}

\section{It\^{o}'s formula for L\'{e}vy-It\^{o} processes}\label{sectItoFormLevyProcess}

\begin{assumption}
All through this section $\Phi$ denotes a quasi-complete barrelled nuclear space and $\Psi$ is a complete bornological nuclear space.      
\end{assumption}

\subsection{L\'{e}vy processes and their stochastic integrals}

\textbf{L\'evy processes and the L\'evy-It\^{o} decomposition.}
Following \cite{FonsecaMora:Levy}, a $\Phi'$-valued process $L=( L_{t} : t\geq 0)$ is called a \emph{L\'{e}vy process} if 
\begin{enumerate}[label=(\roman*)]
\item  $L_{0}=0$ a.s., 
\item $L$ has \emph{independent increments}, i.e. for any $n \in \N$, $0 \leq t_{1}< t_{2} < \dots < t_{n} < \infty$ the $\Phi'$-valued random variables $L_{t_{1}},L_{t_{2}}-L_{t_{1}}, \dots, L_{t_{n}}-L_{t_{n-1}}$ are independent,  
\item L has \emph{stationary increments}, i.e. for any $0 \leq s \leq t$, $L_{t}-L_{s}$ and $L_{t-s}$ are identically distributed,   
\item For every $t \geq 0$ the distribution $\mu_{t}$ of $L_{t}$ is a Radon measure and the mapping $t \mapsto \mu_{t}$ from $\R_{+}$ into the space $\goth{M}_{R}^{1}(\Phi')$ of Radon probability measures on $\Phi'$ is continuous at $0$ when $\goth{M}_{R}^{1}(\Phi')$  is equipped with the weak topology. 
\end{enumerate}

By Corollary 3.11 in \cite{FonsecaMora:Levy} there exists a weaker countably Hilbertian topology $\vartheta$ on $\Phi$  such that $L$ has a $(\widehat{\Phi}_{\vartheta})'$-valued c\`{a}dl\`{a}g version $\tilde{L}=( \tilde{L}_{t} : t \geq 0)$. In particular, $\tilde{L}$ is a $\Phi'$-valued c\`{a}dl\`{a}g L\'{e}vy process which is a version of $L$. We will identify $L$ with $\tilde{L}$.

Let $N=\{N(t,A): \, t \geq 0, A \in \mathcal{B}(\Phi' \setminus \{ 0\})\}$ be the \emph{Poisson random measure} associated to $L$, i.e. $ N(t,A)= \sum_{0 \leq s \leq t} \ind{A}{\Delta L_{s}}$, $\forall \, t \geq 0$, $A \in \mathcal{B}( \Phi' \setminus \{ 0\})$, with respect to the ring $\mathcal{A}$ of all the subsets of $\Phi' \setminus \{0\}$ that are \emph{bounded below} (i.e. $A \in \mathcal{A}$ if $0 \notin \overline{A}$, where $\overline{A}$ is the closure of $A$). The corresponding compensator measure of $N$ is of the form $\nu(\omega; dt; df)= dt \nu (df)$, where $\nu$ is a \emph{L\'{e}vy measure} on $\Phi'$ in the following sense (see \cite{FonsecaMora:Levy}, Theorem 4.11):
\begin{enumerate}
\item $\nu (\{ 0 \})=0$, 
\item for each neighborhood of zero $U \subseteq \Phi'$, the  restriction $\restr{\nu}{U^{c}}$ of $\nu$ on the set $U^{c}$ belongs to the space $\goth{M}^{b}_{R}(\Phi')$ of bounded Radon measures on $\Phi'$,    
\item there exists a continuous Hilbertian semi-norm $\rho$ on $\Phi$ such that 
\begin{equation} \label{integrabilityPropertyLevyMeasure}
\int_{B_{\rho'}(1)} \rho'(f)^{2} \nu (df) < \infty,  \quad \mbox{and} \quad  \restr{\nu}{B_{\rho'}(1)^{c}} \in \goth{M}^{b}_{R}(\Phi'), 
\end{equation}
where  $\rho'$ is the dual norm of $\rho$ and $B_{\rho'}(1) \defeq \{ f \in \Phi': \rho'(f) \leq 1\}$. 
\end{enumerate}

It is shown in Theorem 4.17 in \cite{FonsecaMora:Levy} that relative to $\rho$ satisfying \eqref{integrabilityPropertyLevyMeasure},  for each $t \geq 0$, $L_{t}$ admits the following the \emph{L\'{e}vy-It\^{o} decomposition}:
\begin{equation} \label{eqLevyItoDecomposition}
L_{t}=t\goth{m}+W_{t}+\int_{B_{\rho'}(1)} f \widetilde{N} (t,df)+\int_{B_{\rho'}(1)^{c}} f N (t,df).
\end{equation}
In \eqref{eqLevyItoDecomposition}, we have  $\goth{m} \in \Phi'$, $\widetilde{N}(dt,df)= N(dt,df)-dt \, \nu(df)$ is the compensated Poisson random measure, and $( W_{t} : t \geq 0)$ is a $\Phi'$-valued Wiener process with mean-zero  and \emph{covariance functional} $\widetilde{\mathcal{Q}}$ satisfying 
\begin{equation}\label{covarianceFunctWienerProcess}
\Exp \left( \inner{W_{t}}{\phi} \inner{W_{s}}{\varphi} \right) = ( t \wedge s ) \widetilde{\mathcal{Q}} (\phi, \varphi), \quad \forall \, \phi, \varphi \in \Phi, \, s, t \geq 0. 
\end{equation}
Observe that $\widetilde{\mathcal{Q}}$ is a continuous, symmetric, non-negative bilinear form on $\Phi \times \Phi$. The associated Hilbertian seminorm $\mathcal{Q}$ is defined by $\mathcal{Q}(\phi)=\widetilde{\mathcal{Q}}(\phi, \phi)^{1/2}$ $\forall \phi \in \Phi$. The process $\int_{B_{\rho'}(1)} f \widetilde{N} (t,df)$, $t\geq 0$, is a $\Phi'$-valued mean-zero, square integrable, c\`{a}dl\`{a}g L\'{e}vy process with second moments given by $\Exp \left( \abs{ \inner{\int_{B_{\rho'}(1)} f \widetilde{N} (t,df)}{\phi} }^{2}\right) = t \int_{B_{\rho'}(1)} \abs{\inner{f}{\phi}}^{2} \nu (df)$ $\forall \, t \geq 0$ and $\phi \in \Phi$, and the process $\int_{B_{\rho'}(1)^{c}} f N (t,df)$  $\forall t\geq 0$ is a $\Phi'$-valued c\`{a}dl\`{a}g L\'{e}vy process defined  by means of a Poisson integral with respect to the Poisson random measure $N$ of $L$ on the set $B_{\rho'}(1)^{c}$ (see Section 4.1 in \cite{FonsecaMora:Levy} for more information on Poisson integrals in duals of nuclear spaces). All the random components of the representation \eqref{eqLevyItoDecomposition} are independent.


Now we recall from \cite{FonsecaMora:SPDECMVM, FonsecaMora:SPDELevy} the construction of stochastic integrals with respect to L\'evy processes via cylindrical martingale-valued measures. 

\textbf{L\'evy martingale-valued measures.} Let $U \in \mathcal{B}(\Phi')$ be such that $0 \in U$ and $\int_{U} \, \abs{\inner{f}{\phi}}^{2} \nu(df)< \infty$ for every $\phi \in \Phi$. Take  $\mathcal{R}= \{  U \cap \Gamma: \Gamma \in \mathcal{A} \} \cup \{ \{0\}\}$. 
Define $M=(M(t,A): t \geq 0, A \in \mathcal{R})$  by
\begin{equation} \label{levyMartValuedMeasExam} 
M(t,A) = W_{t} \delta_{0}(A) + \int_{A \backslash \{0 \}} f \widetilde{N}(t,df), \quad \mbox{ for } \, t \geq 0, \, A \in \mathcal{R}. 
\end{equation}
Then $M$ is a nuclear cylindrical martingale-valued measure with independent increments. Moreover, for each $0 \leq s < t$, $A \in \mathcal{R}$ we have:
\begin{equation} \label{secondMomentLevyNuclearMartValuedMeasu}
\Exp \left( \abs{\inner{M((s,t],A)}{\phi}}^{2} \right)=(t-s) \left[ \mathcal{Q}(\phi)^{2} + \int_{A \backslash \{0 \}} \abs{\inner{f}{\phi}}^{2}  \nu(df) \right], \quad \forall \, \phi \in \Phi.
\end{equation}
So in comparison with \eqref{eqDefiNuclearCMVM} we have  $\mu = \delta_{0}+ \restr{\nu}{U}$,  
and the family of seminorms $\{q_{r,f}: r \in \R_{+}, f \in U \}$ is given by 
\begin{equation}\label{defiSemiNormsLevyMartValuedMeas}
q_{r,f}(\phi)= \begin{cases} \mathcal{Q}(\phi), & \mbox{if } f=0, \\  \abs{\inner{f}{\phi}}, & \mbox{if } f \in U \setminus \{0\}. \end{cases}
\end{equation}

We call $M$ defined in \eqref{levyMartValuedMeasExam} a \emph{L\'{e}vy martingale-valued measure} based on the set $U$. 

\textbf{Stochastic integration.}
Let $R: [0, \infty) \times \Omega \times \Phi' \rightarrow \mathcal{L}(\Phi',\Psi')$ satisfy that for every $\psi \in \Psi$, $T>0$,  the mapping $(t,\omega,f) \mapsto R(t,\omega,f)'\psi$ is $\mathcal{P}_{T} \otimes \mathcal{B}(\Phi')/\mathcal{B}(\Phi)$-measurable. Assume that for every $T >0$ and $\psi \in \Psi$ the mapping $R$ satisfies 
\begin{multline*}
\Exp \int_{0}^{T} \int_{U} q_{r,f}(R(r,f)'\psi)^{2} \nu(df) dr \\
=\Exp \int_{0}^{T} \, \left[  \mathcal{Q}(R(s,0)'\psi)^{2} + \int_{U} \abs{\inner{f}{R(s,f)'\psi}}^{2} \nu(df)\, \right] ds < \infty.
\end{multline*}
Hence, the stochastic integral $\int^{t}_{0} \int_{U} R (r,f) M (dr, df)$, $t \geq 0$, can be defined using the 
the theory of stochastic integration developed in Section 5 in \cite{FonsecaMora:SPDECMVM} with respect to the L\'{e}vy martingale-valued measure \eqref{levyMartValuedMeasExam}. 

Observe moreover from \eqref{levyMartValuedMeasExam} that we have 
$$ M^{c}(t,A) = W_{t} \delta_{0}(A), \quad M^{d}(t,A)= \int_{A \backslash \{0 \}} f \widetilde{N}(t,df).  $$
Both $M^{c}$ and $M^{d}$ are orthogonal, with independent increments and nuclear. Moreover, from \eqref{defiSemiNormsLevyMartValuedMeas}, for all $r \geq 0$ we have 
$q^{c}_{r,0}=\mathcal{Q}$ and $q^{c}_{r,f}=0$ for $f \in U \setminus \{ 0\}$; and $q^{d}_{r,f}(\phi)=\abs{\inner{f}{\phi}}$ for all $f \in U$.  Hence, from Proposition \ref{propStochIntegralSumContAndDiscParts} we have
\begin{equation}\label{eqDefiIntegralLevyMVM}
\int^{t}_{0} \int_{U} R (r,f) M (dr, df) \defeq \int_{0}^{t}  R(r,0) dW_{r} + \int_{0}^{t} \int_{U}  R(r,f) \widetilde{N}(dr,df), 
\end{equation}
being the Wiener stochastic integral $\int_{0}^{t}  R(r,0) dW_{r}$ the stochastic integral defined with respect to $M^{c}$, and the Poisson stochastic integral $\int_{0}^{t} \int_{U}  R(r,f) \widetilde{N}(dr,df)$ the stochastic integral defined with respect to $M^{d}$. 

Assume that $V \in \mathcal{A}$. For each $t \geq 0$ we define the \emph{Poisson stochastic integral}:
\begin{equation} \label{eqDefiPoissonInteg}
\int_{0}^{t} \int_{V} R(s,f)  N(ds,df)(\omega) 
= \sum_{0 \leq s \leq t} R(s, \omega, \Delta L_{s}(\omega)) \Delta L_{s}(\omega) \ind{V}{\Delta L_{s}(\omega)},
\end{equation}
which is a finite (random) sum. The Poisson stochastic integral \eqref{eqDefiPoissonInteg} is a $\Psi'$-valued regular adapted process. 

If we further assume that  $\int_{0}^{T} \int_{V} \abs{\inner{f}{R(r,f)' \psi}}^{2} \, \nu(df)dr < \infty$ for every $T>0$ and $\psi \in \Psi$. We may then define
\begin{multline}\label{equaDefiCompenPoissInteg}
\int_{0}^{t} \int_{V} \, R(s,f)' \psi \, \widetilde{N}(ds,df) 
\\ = \int_{0}^{t} \int_{V} \, R(s,f)' \psi \,   N(ds,df)- \int_{0}^{t} \int_{V} \inner{f}{R(s,f)' \psi} \, \nu(df)ds.  
\end{multline}
The above definition is consistent with our earlier definition of the compensated Poisson integral defined with respect to the compensated Poisson random measure $M^{d}$. 

Finally, assume that for every $T>0$ and $\psi \in \Psi'$ the mapping $R$ satisfies 
$$ \Exp \int_{0}^{T} \, \abs{\inner{\goth{m}}{R(r, \omega,0)'\psi}} \, dr < \infty, \quad \forall  \, \psi \in \Psi. $$
Then as explained in Section 3.2 in \cite{FonsecaMora:SPDELevy} there exists a $\Psi'$-valued, regular, continuous process $\int_{0}^{t} R(r,0) \goth{m} dr$, $t \geq 0$, satisfying $\Prob$-a.e.
\begin{equation}\label{eqDefiDetermiIntegDriftGeneral}
\inner{\int_{0}^{t} R(r,\omega,0)\goth{m} dr}{\psi}= \int_{0}^{t} \inner{\goth{m}}{R(r,\omega,0)'\psi} ds, \quad \forall \, t \geq 0,  \omega \in \Omega, \psi \in \Psi. 
\end{equation}
Following \cite{FonsecaMora:SPDELevy}, we define the \emph{stochastic integral of $R$ with respect to $L$ (relative to the set $U$)}  as the $\Psi'$-valued  regular adapted c\`{a}dl\`{a}g process defined for each $t \geq 0$ by
\begin{eqnarray}
\int_{0}^{t} \int_{\Phi'} R(s,f) L(ds,df)
& = & \int_{0}^{t}  R(s,0) \goth{m}  ds + \int_{0}^{t} \int_{U}  R(s,f)  M(ds,df)  \nonumber \\
& {} & + \int_{0}^{t} \int_{U^{c}} R(s,f)   N(ds,df). \label{eqDefiLevyIntegral}   
\end{eqnarray} 

We are ready to define the class of processes for which we will prove It\^{o}'s formula.

\begin{definition}\label{defiLevyItoProcess}
Let $L$, $U$, $M$ and $R$ as described in the above paragraphs. Let $\xi$ and $B$ as in Assumption \ref{assumItoProcess}. 
A \emph{L\'{e}vy-It\^{o} process} defined with respect to $L$ is a $\Psi'$-valued process of the form
\begin{equation}\label{eqDefiLevyItoProcess}
X_t = \xi+\int_0^t B(s) \, ds + \int_{0}^{t} \int_{\Phi'} R(s,f) L(ds,df),   
\end{equation} 
with the stochastic integral as in \eqref{eqDefiLevyIntegral}.
\end{definition}

The following is our main result on It\^{o}'s formula for L\'evy-It\^{o} processes. 

\begin{theorem}\label{theoItoFormulaLevyItoProcesses}
Let $L$ be an $\Phi'$-valued c\`adl\`ag L\'{e}vy process with L\'{e}vy-It\^{o} decomposition \eqref{eqLevyItoDecomposition} and assume $\Gamma$ is a separable complete bornological nuclear space. 
Let $X$ be a L\'{e}vy-It\^{o} process of the form \eqref{eqDefiLevyItoProcess} for $U=B_{\rho'}(1)$. Let $F:[0, \infty) \times \Psi' \rightarrow \Gamma$ be a function of class $C^{1,2}$. We assume $F_{t}$ and $F_{xx}$ is uniformly continuous on bounded subsets of $[0,\infty)\times \Psi'$. Then  almost surely we have for every $t\geq 0$
\begin{flalign*}
& F(t, X_t) \\
& = F(0,\xi)+\int_{0}^{t} F_{t}(s, X_{s-}) ds+\int_0^t F_x(s, X_{s-}) R(s,0) \, dW_s \\
& +  \int_{0}^{t} F_x(s, X_{s-}) (B(s)+R(s,0) \goth{m}) \, ds   + \frac{1}{2} \int_{0}^{t}  \mbox{Tr}_{R(s, 0)} \left( F_{x x}(s, X_{s-})\right)  ds  \\ 
   & + \int_{0}^{t}\!\! \int_{B_{\rho'}(1)^c} \left[ F(s,X_{s-}+R(s,f)f) - F(s,X_{s-}) \right] \, N(ds,df) \\
   & +  \int_{0}^{t}\!\! \int_{B_{\rho'}(1)} \left[ F(s,X_{s-}+R(s,f)f) - F(s,X_{s-}) \right] \, \tilde{N}(ds,df) \\
   & +  \int_{0}^{t}\!\! \int_{B_{\rho'}(1)} \left[ F(s,X_{s-}+R(s,f)f) - F(s,X_{s-})-F_{x}(s,X_{s-})(R(s,f)f) \right] \,\nu(df)ds.
\end{flalign*}
\end{theorem}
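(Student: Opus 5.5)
Following the introduction, the plan is to apply Theorem \ref{theoItoFormula} to the part of $X$ without large jumps, and then add the large jumps by interlacing. As in the proof of Theorem \ref{theoItoFormula}, it suffices to prove the scalar case $\Gamma'=\R$ by pairing with $\gamma\in\Gamma$ and using a countable dense subset of $\Gamma$. So assume $F$ is real-valued.

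\textbf{Step 1: the process without large jumps.} Set
$$Y_t=\xi+\int_0^t (B(s)+R(s,0)\goth{m})\,ds+\int_0^t\!\!\int_{B_{\rho'}(1)} R(s,f)\,M(ds,df),$$
with $M$ the L\'evy martingale-valued measure \eqref{levyMartValuedMeasExam} based on $U=B_{\rho'}(1)$. This is a generalized It\^o process in the sense of Definition \ref{defiItoProcess}, with drift $B+R(\cdot,0)\goth{m}$; its integrability follows from the assumptions in Definition \ref{defiLevyItoProcess}. Theorem \ref{theoItoFormula} applies to $Y$ with $\mu=\delta_0+\restr{\nu}{U}$ and $q^c_{r,0}=\mathcal{Q}$, $q^c_{r,f}=0$ for $f\neq0$. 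The trace term therefore reduces to $\frac12\int_0^t \mathrm{Tr}_{R(s,0)}(F_{xx}(s,Y_{s-}))\,ds$, since $\mu(\{0\})=1$. By \eqref{eqDefiIntegralLevyMVM}, the martingale term splits into $\int_0^t F_x R(s,0)\,dW_s+\int_0^t\int_U F_x(s,Y_{s-})R(s,f)\,\widetilde N(ds,df)$. The jumps of $Y$ are $\Delta Y_s=R(s,\Delta L_s)\Delta L_s$ with $\Delta L_s\in U$, so the jump sum equals
$$\int_0^t\!\!\int_U \bigl[F(s,Y_{s-}+R(s,f)f)-F(s,Y_{s-})-F_x(s,Y_{s-})R(s,f)f\bigr]N(ds,df).$$
I would write $N=\widetilde N+ds\,\nu(df)$ and merge the $\widetilde N$-part with the $F_x R\,\widetilde N$ integral. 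This produces the $\widetilde N$ term of the statement plus the compensator term $\int\!\!\int[\cdots]\nu(df)ds$.

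\textbf{Main obstacle: justifying this split.} The splitting needs each piece to be well defined: the integrand $F(s,Y_{s-}+R(s,f)f)-F(s,Y_{s-})$ must be stochastically integrable against $\widetilde N$, and the second-order remainder must be $\nu\otimes ds$-integrable. I would first localize as in Step 2 of the proof of Theorem \ref{theoItoFormula}, using stopping times $\tau_n$ such that $Y$ is bounded in some $\Psi'_{\varrho_n}$ and the coefficients satisfy Assumption A. Then:
\begin{itemize}
\item by Step 1 of that proof, $F_x$ and $F_{xx}$ are bounded on bounded sets;
\item the Taylor bound gives $|[\cdots]|\le \frac C2\varrho_n'(R(s,f)f)^2$;
\item $\mathbb{E}\int\!\int_U\varrho_n'(R(s,f)f)^2\nu(df)ds<\infty$ follows from the Hilbert--Schmidt bound $\|R(s,f)\|_{\mathcal L_2(\Phi'_{q_{s,f}},\Psi'_{\varrho_n})}$, since $\Phi'_{q_{s,f}}$ is one-dimensional, spanned by $f$.
\end{itemize}
This gives absolute convergence of the compensator integral and lets the $N$-integral be split into its $\widetilde N$-part plus its compensator. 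The consistency of the compensated Poisson integral \eqref{equaDefiCompenPoissInteg} with the $M^d$-integral identifies the two $\widetilde N$ integrals, in the weak sense for each $\psi$ and hence in the sense of the formula. Letting $n\to\infty$ removes the localization.

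\textbf{Step 2: interlacing the large jumps.} We have $X_t=Y_t+\int_0^t\int_{B_{\rho'}(1)^c}R(s,f)f\,N(ds,df)$. Since $\nu(B_{\rho'}(1)^c)<\infty$, the large jumps occur at a.s. finite, increasing jump times $0<T_1<T_2<\cdots\to\infty$, and on each $[T_{k},T_{k+1})$ the process $X$ differs from $Y$ by an $\mathcal F_{T_k}$-measurable constant. On $[0,T_1)$ we have $X=Y$, so Step 1 applies. At $T_1$,
$$F(T_1,X_{T_1})-F(T_1,X_{T_1-})=F(T_1,X_{T_1-}+R(T_1,f)f)-F(T_1,X_{T_1-})\quad\text{with } f=\Delta L_{T_1},$$
which is exactly the increment of the $N(ds,df)$ integral over $B_{\rho'}(1)^c$. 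On $[T_1,T_2)$, I would apply Step 1 to the shifted It\^o process started at $X_{T_1}$ with initial time $T_1$, i.e.\ to $t\mapsto F(T_1+t,X_{T_1+t})$ with $\mathcal F_{T_1+t}$. This uses the strong Markov/independent-increments property of $N$ and $W$ after a stopping time, together with the fact that a c\`adl\`ag jump time of $L$ is not a fixed time. Concatenating the pieces inductively over $k$ and summing gives the formula on $[0,T_k]$ for every $k$, hence for all $t$, almost surely.
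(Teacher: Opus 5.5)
\paragraph{The gap.} The step you flagged as the main obstacle is not resolved by your argument. Localizing $Y$ bounds $Y_{s-}$ and the \emph{actual} jumps. But the $\nu(df)\,ds$-integral runs over every $f\in B_{\rho'}(1)$, and Assumption A only gives $\int_0^T\!\int_U\varrho_n'(R(s,f)f)^2\,\nu(df)\,ds\le K$, not a bound on $\sup_f\varrho_n'(R(s,f)f)$. So $Y_{s-}+\theta R(s,f)f$ need not stay in a bounded set. The constant $C$ from Step 1 of the proof of Theorem \ref{theoItoFormula} therefore does not control $F_{xx}$ there, and your bound $|[\cdots]|\le\frac C2\varrho_n'(R(s,f)f)^2$ is unjustified.

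No sharper estimate can fix this. Take $\Phi=\Psi=\Gamma=\R$, $\nu(df)=\mathbbm{1}_{(0,1)}(f)\,df$, $\xi=0$, $B=0$, $W=0$, $\goth{m}=0$, $R(s,f)f=|f-\tfrac12|^{-1/4}$ for $f\in(0,1)$ (square integrable against $\nu$), and the convex function $F(x)=e^{x^4}$. Then:
\begin{itemize}
\item $\int_0^1[F(x+R(s,f)f)-F(x)-F'(x)R(s,f)f]\,df=+\infty$ for every $x\in\R$;
\item the $\widetilde N$-integral of $F(X_{s-}+R(s,f)f)-F(X_{s-})$ does not exist;
\item yet the output of Theorem \ref{theoItoFormula} (the jump sum as an $N$-integral plus $\int\!\int F_x(s,X_{s-})R(s,f)f\,\widetilde N(ds,df)$) is perfectly meaningful.
\end{itemize}
The paper's proof performs the same split without comment, so the statement itself needs an extra hypothesis. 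For example: a continuous Hilbertian seminorm $\varrho$ on $\Psi$ with $\sup_{s\le t,\,f\in B_{\rho'}(1)}\varrho'(R(s,f)f)<\infty$ a.s.\ for every $t$. This is the analogue of the boundedness condition on the small-jump coefficient in the finite-dimensional L\'evy-type It\^o formula. With it, localize also in this quantity and take $\varrho_n\ge\varrho$; your Taylor bound then holds and the split is legitimate.

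\paragraph{Comparison of routes.} Apart from this, your route differs from the paper's, which does not interlace. The paper:
\begin{itemize}
\item takes $U_n=B_{\rho_n'}(n)$ exhausting $(\widehat{\Phi}_\vartheta)'$ and stops at $\tau_n=\inf\{t:\Delta L_t\notin U_n\}$;
\item rewrites the stopped process as a single generalized It\^o process driven by the L\'evy martingale-valued measure on $U_n$, compensating the jumps in $U_n\setminus U_1$ and moving $\int_{U_n\setminus U_1}R(s,f)f\,\nu(df)$ into the drift;
\item applies Theorem \ref{theoItoFormula} once on $[0,t\wedge\tau_n]$, regroups the terms, and lets $n\to\infty$.
\end{itemize}
This avoids restarting at random times. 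However, it tacitly needs $\nu$-square-integrability of $R(s,f)f$ over $U_n\setminus U_1$ and integrability of the new drift, which Definition \ref{defiLevyItoProcess} does not supply. Your interlacing treats the large jumps pathwise and needs nothing from $R$ on $B_{\rho'}(1)^c$.

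The cost of your route is the restart on $[T_k,T_{k+1})$. Besides the strong Markov shift of $W$, $\widetilde N$, the filtration and the coefficients, the new initial value $X_{T_k}$ is in general not integrable, since no moments are assumed on the large jumps or on $R$ there. So Assumption \ref{assumItoProcess}(1) fails, and you must truncate on $\mathcal{F}_{T_k}$-measurable events such as $\{\varrho_m'(X_{T_k})\le m\}$, using that $X_{T_k}$ lies a.s.\ in $\bigcup_m\Psi'_{\varrho_m}$. That part is routine to repair.
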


\begin{proof}
As explained in the proof of Theorem \ref{theoItoFormula}, it suffices to prove the result under the assumption $\Gamma=\R$.

Let $\vartheta$ be a weaker countably Hilbertian topology  on $\Phi$ such that $L$ has a $(\widehat{\Phi}_{\vartheta})'$-valued c\`{a}dl\`{a}g version. Thus we have
$\Prob \left( L_{t} \in (\widehat{\Phi}_{\vartheta})', \forall t \geq 0 \right)=1$. Let $(\rho_{n}: n \in \N)$ an increasing sequence of continuous Hilbertian semi-norms on $\Phi$ that generates the topology of $\vartheta$. Without of generality, we can assume $\rho_{1}=\rho$ (recall $\rho'$ satisfies  \eqref{integrabilityPropertyLevyMeasure}). Since $(\rho_{n})$ is increasing, we have 
(see \cite{GelfandVilenkin}, Section IV.2.2)
$$  (\widehat{\Phi}_{\vartheta})' = \bigcup_{n \in \N} \Phi'_{\rho_{n}}= \bigcup_{n \in \N} B_{\rho'_{n}}(n),$$
where we recall $B_{\rho'_{n}}(n)=\{ f \in \Phi': \rho_{n}'(f) \leq n  \}$. Observe that $\{ B_{\rho_{n}'}(n): n \in \N \}$ is an increasing sequence of  bounded, closed, convex, balanced subsets of $\Phi'$. Furthermore, by the equality above we have  
\begin{equation} \label{levyAlmostSureCountUnionBallsInDual}
\Prob \left( L_{t} \in \bigcup_{n \in \N} B_{\rho'_{n}}(n), \, \forall \, t \geq 0 \right)=1. 
\end{equation}

For each $n \in \N$ let $U_{n}=B_{\rho'_{n}}(n)$ and define $\tau_{n}$ by 
\begin{equation} \label{defiStoppingTimesTauNLevyNoise}
 \tau_{n}(\omega) \defeq \inf \{ t \geq 0: \Delta L_{t} (\omega) \notin U_{n} \}, \quad \forall \, \omega \in \Omega.
\end{equation}  
It is clear that each  $\tau_{n}$ is a stopping time, and from \eqref{levyAlmostSureCountUnionBallsInDual} we have  $\tau_{n} \rightarrow \infty$ $\Prob$-a.e. as $n \rightarrow \infty$. Furthermore, observe that for each $n \in \N$, from the definition of $U_{n}$, for every $\phi \in \Phi$  we have 
\begin{flalign} 
& \int_{U_{n}} \, \abs{\inner{f}{\phi}}^{2} \nu(df) \nonumber \\
&\leq \rho_{1}(\phi)^{2} \int_{B_{\rho'_{1}}(1)} \rho'_{1}(f)^{2} \nu(df)+ n^{2} \rho_{n}(\phi)^{2} \nu (B_{\rho'_{1}}(1)^{c}) < \infty. \label{intLevyMeasuOnUnIsBounded}
\end{flalign}

For every $n \in \N$, consider the stopped process $X^n_t = X_{t \wedge \tau_n}$ for all $t \geq 0$. Clearly $X^{n}_t \rightarrow X_t$ $\Prob$-a.e.  Observe moreover that by the definition of $\tau_{n}$, \eqref{eqDefiIntegralLevyMVM} and \eqref{equaDefiCompenPoissInteg} we have
\begin{align*}
    X^n_t & = \xi+\int_0^{t \wedge \tau_n} (B(s)+R(s,0)\goth{m}) \, ds \\ & \ \ \ + \int_0^{t \wedge \tau_n}\!\!\int_{U_1} \Phi(s,f)\, M(ds, df)  + \int_0^{t \wedge \tau_n}\!\!\int_{U_n \backslash U_1} R(s,f) f \, N(ds, df) \\
    & = \xi+  \int_0^{t \wedge \tau_n} \left( B(s)+ R(s,0)\goth{m} + \int_{U_n \backslash U_1} R(s,f) f \, \nu(df) \right)\, ds \\
    & \ \ \ + \int_0^{t \wedge \tau_n}\!\!\int_{U_1} R(s,f)\, M(ds, df) \\
    & \ \ \ + \int_0^{t \wedge \tau_n}\!\!\int_{U_n \backslash U_1} R(s,f) f \, N(ds, df) - \int_0^{t \wedge \tau_n}\!\! \int_{U_n \backslash U_1} R(s,f) f \, ds \nu(df) \\
    & = \xi+  \int_0^{t \wedge \tau_n} R_{n}(s) \, ds + \int_0^{t \wedge \tau_n}\!\! \int_{U_n} R(s,f)\, M(ds, df)
\end{align*}
In the last line of the above equality, we have 
$$ R_{n}(s)= B(s)+R(s,0)\goth{m} + \int_{U_n \backslash U_1} R(s,f) f \, \nu(df), $$
and $M$ is as in \eqref{levyMartValuedMeasExam} on the set $U_{n}$ (it is well defined due to \eqref{intLevyMeasuOnUnIsBounded}).  This shows that $X^{n}$ is a  generalized It\^{o} process, as in Definition \ref{defiLevyItoProcess} on the set $U_{n}$.

We can then apply It\^o's formula (Theorem \ref{theoItoFormula}) to $X^{n}$ to obtain
\begin{align*}
F(t, X^n_t) & = F(0,\xi)+\int_{0}^{t}\!\!\int_{U_{n}} F_x(s,X^n_{s-}) R(s,f) {M}(ds, df)  + \int_{0}^{t} F_{x}(s, X^n_{s-}) R_n(s) \, ds \\
& \ \ \ + \int_{0}^{t} F_{t}(s, X^n_{s-}) ds  +\tfrac{1}{2} \int_{0}^{t} \mbox{Tr}_{R(s,0)} \left( F_{x x}(s, X^n_{s-}) \right) ds \\
& \ \ \ +\sum_{0<s \leq t}\left[F\left(s,X^n_s\right)-F\left(s,X^n_{s-}\right)-F_x\left(s,X^n_{s-}\right)\left(\Delta X^n_{s} \right)\right].
\end{align*}

By \eqref{eqDefiIntegralLevyMVM} and \eqref{equaDefiCompenPoissInteg}, the first integral corresponds to
\begin{align*}
   & \int_0^t F_x(s, X^n_{s-}) R(s,0) \, dW_s + \int_{0}^{t}\!\!\int_{U_1 \setminus \{0\}} F_x(s,X^n_{s-}) R(s,f)f\, \tilde{N}(ds, df) \\
   +\ & \int_{0}^{t}\!\!\int_{U_n \backslash U_1} F_x(s,X^n_{s-}) R(s,f)f\, {N}(ds, df) - \int_{0}^{t}\!\!\int_{U_n \backslash U_1} F_x(s,X^n_{s-}) R(s,f) f \, \nu(df) \, ds.
\end{align*}
By expanding $R_n$, the second integral of It\^o's formula is equal to
\begin{align*}
    \int_0^t F_x(s, X^n_{s-}) (B(s)+R(s,0) \goth{m}) \, ds + \int_{0}^{t}\!\!\int_{U_n \backslash U_1} F_x(s,X^n_{s-}) R(s,f) f\, \nu(df) \, ds.
\end{align*}
Note that when adding the two decompositions above, the integral with respect to $\nu(df) ds$ cancels out. 

By definition of the Poisson integral, the finite random sum is equal to 
$$
\int_{0}^{t}\!\! \int_{U_n} \left[F(s,X^n_{s-}+R(s,f)f) - F(s,X^n_{s-})-F_{x}(s,X^n_{s-})(R(s,f)f)\right] \, N(ds,df).
$$

Combining all the identities above, we obtain
\begin{align}\label{ItoformulafortruncatedX}
    & F(t, X^n_t) = F(0,\xi)+ \int_{0}^{t} F_{t}(s, X^n_{s-}) ds+\int_0^t F_x(s, X^n_{s-}) R(s,0) \, dW_s   \\
   & + \int_0^t F_x(s, X^n_{s-}) (B(s)+R(s,0) \goth{m}) \, ds + \frac{1}{2} \int_{0}^{t}  \mbox{Tr}_{R(s, 0)} \left( F_{x x}(s, X^n_{s-})\right)  ds \nonumber \\
   & + \int_{0}^{t}\!\!\int_{U_1} F_x(s,X^n_{s-}) R(s,f)f \, \tilde{N}(ds, df) + \int_{0}^{t}\!\!\int_{U_n \backslash U_1} F_x(s,X^n_{s-}) R(s,f)f \, {N}(ds, df)  \nonumber \\
   & + \int_{0}^{t}\!\! \int_{U_n} \, \left[F(s,X^n_{s-}+R(s,f)f) - F(s,X^n_{s-})-F_{x}(s,X^n_{s-})(R(s,f)f) \right] \, N(ds,df). \nonumber
\end{align}
The latest integral can be further decomposed by separating its domain in $U_1$ and $U_n \setminus U_1$, then adding and subtracting the integral with respect to $\nu(du) ds$, to get the compensated Poisson integral on $U_1$, thus obtaining 
\begin{align*}
   & \ \int_{0}^{t}\!\! \int_{U_n \setminus U_1} \left[F(s,X^n_{s-}+R(s,f)f) - F(s,X^n_{s-})-F_{x}(s,X^n_{s-})(R(s,f)f) \right] \, N(ds,df) \\
   & + \int_{0}^{t}\!\! \int_{U_1} \left[F(s,X^n_{s-}+R(s,f)f) - F(s,X^n_{s-})-F_{x}(s,X^n_{s-})(R(s,f)f) \right] \, \tilde{N}(ds,df) \\
   & + \int_{0}^{t}\!\! \int_{U_1} \left[F(s,X^n_{s-}+R(s,f)f) - F(s,X^n_{s-})-F_{x}(s,X^n_{s-})(R(s,f)f) \right] \,\nu(df)ds.
\end{align*}
Note that this computation, along with \eqref{ItoformulafortruncatedX}, provides some extra cancellation for some of the terms, giving us the desired result for the process $X^n$, that is,
\begin{eqnarray*}
F(t, X^n_t) & = & F(0,\xi)+\int_{0}^{t} F_{t}(s, X^n_{s-}) ds + \int_0^t F_x(s, X^n_{s-}) R(s,0) \, dW_s \\
   & + & \int_0^t F_x(s, X^n_{s-}) (B(s)+R(s,0) \goth{m}) \, ds   + \frac{1}{2} \int_{0}^{t}  \mbox{Tr}_{R(s, 0)} \left( F_{x x}(s, X^n_{s-})\right)  ds  \\ 
   & + & \int_{0}^{t}\!\! \int_{U_n \setminus U_1} \left[ F(s,X^n_{s-}+R(s,f)f) - F(s,X^n_{s-}) \right] \, N(ds,du) \\
   & + & \int_{0}^{t}\!\! \int_{U_1} \left[F(s,X^n_{s-}+R(s,f)f) - F(s,X^n_{s-}) \right] \, \tilde{N}(ds,df) \\
   & + & \int_{0}^{t}\!\! \int_{U_1} \left[F(s,X^n_{s-}+R(s,f)f) - F(s,X^n_{s-})-F_{x}(s,X^n_{s-})(R(s,f)f) \right] \,\nu(df)ds.
\end{eqnarray*}
    
By similar arguments to those in Step 2 in the proof of Theorem \ref{theoItoFormula}, we have the following pointwise almost surely convergences 
$$
F(t, X^n_t) \to F(t, X_t), \quad \int_{0}^{t} F_{t}(s, X^n_{s-}) ds \to \int_{0}^{t} F_{t}(s, X_{s-}) ds,
$$
$$
\int_0^t F_x(s, X^n_{s-}) (B(s)+R(s,0) \goth{m}) \, ds \to \int_0^t F_x(s, X_{s-}) (B(s)+R(s,0) \goth{m}) \, ds,
$$
$$
\frac{1}{2} \int_{0}^{t}  \mbox{Tr}_{R(s, 0)} \left( F_{x x}(s, X^n_{s-})\right)  ds \to \frac{1}{2} \int_{0}^{t}  \mbox{Tr}_{R(s, 0)} \left( F_{x x}(s, X_{s-})\right)  ds,
$$
and the latest integral to 
$$
\int_{0}^{t}\!\! \int_{U_1} \left[F(s,X_{s-}+R(s,f)f) - F(s,X_{s-})-F_{x}(s,X_{s-})(R(s,f)f) \right] \,\nu(df)ds.
$$
The same reasoning gives the convergence in probability of stochastic integrals with respect to $W$ and $\tilde{N}$ respectively to
$$
\int_0^t F_x(s, X_{s-}) R(s,0) \, dW_s, \quad  \int_{0}^{t}\!\! \int_{U_1} \left[F(s,X^n_{s-}+R(s,f)f) - F(s,X^n_{s-}) \right] \, \tilde{N}(ds,df).
$$
And finally, the integral with respect to $N$, which is only a finite random sum, converges in probability to 
$$
\int_{0}^{t}\!\! \int_{U_1^{c}} \left[ F(s,X_{s-}+R(s,f)f) - F(s,X_{s-}) \right] \, N(ds,df),
$$
which completes the proof.
\end{proof}

\section{Examples}\label{sectExamples}

In this section we consider examples and applications of the theory developed in the last section.

\begin{example}\label{examSPDEAsLevyIto}
For each $t \geq 0$ let $A(t) \in \mathcal{L}(\Psi,\Psi)$. Assume that for every $\psi \in \Psi$ the mapping $t \mapsto A(t) \psi$ is continuous from $[0,\infty)$ into $\Psi$, and that the family $(A(t): t\geq 0)$ generates the backward evolution system $(U(s,t): 0 \leq s \leq t < \infty)$ (see Section 4.1 in \cite{FonsecaMora:SPDELevy} for definitions and properties of evolution systems).  Consider the following (time-inhomogeneous) stochastic evolution equation, 
\begin{equation}\label{eqSEELevy}
d Y_{t}=A(t)' Y_{t}dt +\int_{\Phi'} R(t,f) L(dt,df), \quad t \geq 0, 
\end{equation}
with initial condition $Y_{0}=\xi$ $\Prob$-a.e. 
By Theorem 4.7 in \cite{FonsecaMora:SPDELevy},  \eqref{eqSEELevy} has a weak solution given by the mild (or evolution) solution 
\begin{equation*}\label{eqDefMildSoluSEELevy}
X_{t}=U(t,0)'\xi+\int_{0}^{t}\int_{\Phi'} \, U(t,s)' R(s,f) \, L(ds,df), \quad \forall \, t \geq 0. 
\end{equation*}
(See Section 4.2 in \cite{FonsecaMora:SPDELevy} for properties of the stochastic convolution.) Since $A(t) \in \mathcal{L}(\Psi,\Psi)$, the mild solution is indeed a strong solution to \eqref{eqSEELevy}, that is 
$$ X_{t}=\xi+\int_{0}^{t} A(s)'X_{s} ds + \int_{0}^{t} \int_{\Phi'} R(s,f) L(ds,df).$$
If we let $B(t,\omega)=A(t)'X_{t}(\omega)$, we might be tempted to think that $X=(X_{t}: t \geq 0)$ is a L\'evy-It\^{o} process, however, $B$ as defined above might not satisfy \eqref{eqLocalIntegCoeffiB}. One need to impose additional assumptions on $L$ to guarantee \eqref{eqLocalIntegCoeffiB} holds. 

Assume additionally that 
\begin{enumerate}[label=(\roman*)]
\item $\xi$ is square integrable, that is, $\Exp \left( \abs{\inner{\xi}{\psi}}^{2}\right)<\infty$ for every $\psi \in \Psi$,
\item $L$ is square integrable, that is, $\Exp \left( \abs{\inner{L_{t}}{\phi}}^{2}\right)<\infty$ for every $t \geq 0$, $\phi \in \Phi$,
\item $R$ satisfies
\begin{equation}\label{eqSquareMomenCoefROnDrif}
\Exp \int_{0}^{T} \, \abs{\inner{\goth{m}}{R(r, \omega,0)'\psi}}^{2} \, dr < \infty, \quad \forall  \, \psi \in \Psi.    
\end{equation} 
\end{enumerate}
Then, by Theorem 5.4 in \cite{FonsecaMora:SPDELevy} the mild solution $X$ is the unique weak solution to  \eqref{eqSEELevy} and for every $T>0$ there exists a continuous Hilbertian seminorm $\varrho$ on $\Psi$ (depending on $T$) such that $\Exp \int_{0}^{T} \varrho'(X_{t})^{2} dt < \infty$. Thus, since the family $(A(t): t\geq 0)$  is locally equicontinuous, there exists $C>0$ (depending on $\varrho$ and $T$) such that $\sup_{0 \leq t \leq T} \varrho(A(t)\psi) \leq C \varrho(\psi)$ for all $\psi \in \Psi$. Hence, for the process $B$ we have for every $T>0$ 
$$ \Exp \int_{0}^{T} \abs{\inner{B(t)}{\psi}}^{2} dt \leq C^{2} \varrho^{2}(\psi) \Exp \int_{0}^{T} \varrho'(X_{t})^{2} dt < \infty, \quad \forall \psi \in \Psi $$
The above implies \eqref{eqLocalIntegCoeffiB}. Therefore, under the additional assumptions above, we have $X$ is a L\'evy-It\^{o} process. 
\end{example}

\begin{example} A particular important instance of Example \ref{examSPDEAsLevyIto} is the stochastic heat equation with additive L\'evy noise in the space of tempered distributions.
Let $\Phi=\Psi=\mathcal{S}(\R^{d})$ and assume $L=(L_{t}:t\geq 0)$ is a $\mathcal{S}'(\R^{d})$-valued  square integrable c\`adl\`ag L\'{e}vy process. Let   $\Delta$ be the Laplace operator on $\mathcal{S}'(\R^{d})$. It is well-known that the Laplace operator $\Delta$ on $\mathcal{S}(\R^{d})$ is the infinitesimal generator of the \emph{heat semigroup}  $(S(t): t \geq 0)$ which is the $C_{0}$-semigroup on $\mathcal{S}(\R^{d})$ defined as: $S(0)=I$ and for each $t>0$, 
\begin{equation*}
(S(t)\psi)(x) \defeq \frac{1}{(4 \pi t)^{d/2}}  \int_{\R^{d}} e^{-\norm{x-y}^{2}/4t} \psi(y) dy, \quad \forall \psi \in  \mathcal{S}(\R^{d}), \, x \in \R^{d}. 
\end{equation*}
Recall that $\Delta \in \mathcal{L}(\mathcal{S}(\R^{d}),\mathcal{S}(\R^{d}))$ and the heat semigroup is equicontinuous hence a $(C_{0},1)$-semigroup. 

Let $G \in \mathcal{L}(\mathcal{S}(\R^{d}),\mathcal{S}(\R^{d}))$. If we take  $R(t,\omega,f)=G'$ for every $t \geq 0$, $\omega \in \Omega$, $f \in \mathcal{S}'(\R^{d})$, then $R$ satisfies  \eqref{eqSquareMomenCoefROnDrif} and clearly we have $\int_{0}^{t} \int_{\Phi'} R(s,f) L(ds,df)=G'L_{t}$. Hence, as  explained in Example \ref{examSPDEAsLevyIto} (with $A(t)=\Delta$), the stochastic evolution equation
\begin{equation}\label{eqHeatSEELevy}
d Y_{t}=\Delta Y_{t}dt+G' dL_{t}, \quad t \geq 0,
\end{equation}
with initial condition $Y_{0}=\xi$, has a unique weak solution $X=(X_{t}: t \geq 0)$ which corresponds to its mild solution. Moreover, $X$ is a  L\'evy-It\^{o} process satisfying 
$$ X_{t}=\xi +\int_{0}^{t} \Delta X_{s} ds + G'L_{t}.$$
A particularity of this example is that by Corollary 5.8 in \cite{FonsecaMora:SPDELevy}, the process $X$ has a unique c\`adl\`ag version $(Z_{t}: t \geq 0)$ satisfying for every $t \geq 0$, $\psi \in \Psi$,  
$$ \inner{Z_{t}}{\psi}= \inner{\xi}{S(t)\psi}+\int_{0}^{t} \inner{L_{s}}{G \Delta S(t-s)\psi)} \, ds + \inner{L_{t}}{G \psi}, \quad \forall \psi \in \mathcal{S}(\R^{d}).$$
\end{example}

\begin{example}
 Let $\phi_{1}, \phi_{2}, \dots, \phi_{n} \in \mathcal{S}(\mathbb{R}^d)$ and let $g \in \mathcal{S}'(\mathbb{R}^d)$. Let $h:[0,\infty) \times \R^{n} \rightarrow \R$ be a function of class $C^{1,2}$ and for every $f \in \mathcal{S}'(\mathbb{R}^d)$ let $\overline{\phi}(f) = (\langle f, \phi_{1} \rangle, \dots, \langle f, \phi_{n} \rangle) \in \mathbb{R}^n$.  Define $F: [0, \infty) \times \mathcal{S}'(\mathbb{R}^d) \rightarrow \mathcal{S}'(\mathbb{R}^d)$  by the prescription 
    $$ F(t,f)=h(t,\overline{\phi}(f))g.$$
For a fixed $f \in \mathcal{S}'(\mathbb{R}^d)$, the time derivative $F_t(t,f): \mathbb{R} \to \mathcal{S}'(\mathbb{R}^d)$ is
$$F_{t}(t,f) = \partial_t h(t, \overline{\phi}(f))g.$$
Now we calculate the first and second derivatives with respect to the spatial variable $f$. The first Fréchet derivative $F_x(t,f) \in \mathcal{L}(\mathcal{S}'(\mathbb{R}^d), \mathcal{S}'(\mathbb{R}^d))$ is given by 
$$F_x(t,f)(u) = \sum_{i=1}^{n} \partial_{i+1} h(t,\overline{\phi}(f)) \langle u, \phi_i \rangle g,$$
where  $\partial_{i+1} h$ denotes the partial derivative of $h$ with respect to its $(i+1)$-th argument (matching the coordinate $x_i = \langle f, \phi_i \rangle$).

The second Fréchet derivative $F_{xx}(t,f) \in \mbox{Bil}(\mathcal{S}'(\mathbb{R}^d) , \mathcal{S}'(\mathbb{R}^d);  \mathcal{S}'(\mathbb{R}^d)$ is given by
$$F_{xx}(t,f)(u, v) = \sum_{i=1}^{n} \sum_{j=1}^{n} \partial_{j+1}\partial_{i+1} h(t, \overline{\phi}(f)) \langle u, \phi_i \rangle \langle v, \phi_j \rangle g.$$
Let $(L_{t}: t \geq 0)$ be a $\mathcal{S}'(\mathbb{R}^d)$-valued L\'evy process with L\'evy-It\^{o} decomposition \eqref{eqLevyItoDecomposition}. Let $\overline{0}=(0,\dots, 0)$. If we apply It\^{o}'s formula to $X_{t}=L_{t}$ (that is, to the L\'evy-It\^{o} process with $\xi=0$,   $B(t,\omega)=0$ and $R(t,\omega,f)=I$), we get

 \begin{flalign*}
& h(t,\overline{\phi}(L_{t}))g 
  = h(0,\overline{0})g + \left(\int_{0}^{t} \partial_s h(s, \overline{\phi}(L_{s-})) ds \right) g  \\
& + \left( \sum_{i=1}^{n} \int_0^t  \partial_{i+1} h(s,\overline{\phi}(L_{s-})) \langle dW_s, \phi_i \rangle \right) g    +   \left( \sum_{i=1}^{n} \int_0^t  \partial_{i+1} h(s,\overline{\phi}(L_{s-})) \langle \goth{m}, \phi_i \rangle ds \right) g \\
&  + \frac{1}{2} \left( \sum_{i=1}^{n} \sum_{j=1}^{n} \widetilde{\mathcal{Q}}(\phi_{i},\phi_{j}) \int_{0}^{t}\partial_{j+1}\partial_{i+1} h(s, \overline{\phi}(L_{s-})) ds \right) g \\ 
   & + \left( \int_{0}^{t}\!\! \int_{B_{\rho'}(1)^c} \left[ h(t,\overline{\phi}(L_{t-}+f))-h(t,\overline{\phi}(L_{t-}))  \right] \, N(ds,df) \right) g \\
   &  +  \left( \int_{0}^{t}\!\! \int_{B_{\rho'}(1)} \left[ h(s,\overline{\phi}(L_{s-}+f))-h(s,\overline{\phi}(L_{s-}))  \right] \, \tilde{N}(ds,df) \right)g \\
   &  +  \left(  \int_{0}^{t}\!\! \int_{B_{\rho'}(1)} \left[ h(s,\overline{\phi}(L_{s-}+f))-h(s,\overline{\phi}(L_{s-})) - 
   \sum_{i=1}^{n} \partial_{i+1} h(s,\overline{\phi}(L_{s-})) \langle f, \phi_i \rangle
   \right] \,\nu(df)ds \right)g.
\end{flalign*}
In the calculation of the trace (Definition \ref{defiTrace}) we have used the fact that $q^{c}_{r,0}=\mathcal{Q}$ (with $\widetilde{\mathcal{Q}}$ its associated bilinear form) and $q^{c}_{r,f}=0$ for $f \in B_{\rho'}(1) \setminus \{ 0\}$.
\end{example}

\smallskip

\noindent \textbf{Funding} This work was partially supported by The University of Costa Rica through the grant ``C6163- Ecuaciones diferenciales estoc\'{a}sticas en espacios de Hilbert''. 


\smallskip

\noindent \textbf{Data Availability.} Data sharing not applicable to this article as no data sets were generated or analyzed during the current study.


\noindent \textbf{Conflict of interest} The authors have no conflicts of interest to declare that are relevant to the content of this article.




\end{document}